\documentclass[reqno,10pt]{amsart}
\usepackage{amsmath,amssymb,amsthm,mathtools}
\usepackage[margin=1.05in]{geometry}
\usepackage{hyperref}
\hypersetup{colorlinks=true,linkcolor=blue,citecolor=black,urlcolor=blue}

\newtheorem{counter}{}[section]
\theoremstyle{definition}
\newtheorem{definition}[counter]{Definition}
\newtheorem{convention}[counter]{Convention}
\newtheorem{example}[counter]{Example}
\newtheorem{remark}[counter]{Remark}
\newtheorem{assumption}[counter]{Assumption}
\theoremstyle{plain}
\newtheorem{lemma}[counter]{Lemma}
\newtheorem{proposition}[counter]{Proposition}
\newtheorem{theorem}[counter]{Theorem}
\newtheorem{corollary}[counter]{Corollary}

\newcommand{\C}{\mathcal{C}}
\newcommand{\D}{\mathcal{D}}
\newcommand{\B}{\mathcal{B}}
\newcommand{\N}{\mathcal{N}}
\newcommand{\T}{\mathcal{T}}
\newcommand{\Z}{\mathcal{Z}}
\newcommand{\kk}{\Bbbk}
\newcommand{\Vect}{\mathsf{Vect}}
\newcommand{\sVect}{\mathsf{sVect}}
\newcommand{\Rep}{\mathrm{Rep}}
\newcommand{\Hom}{\mathrm{Hom}}
\newcommand{\Inv}{\mathrm{Inv}}
\newcommand{\unit}{\mathbf{1}}
\newcommand{\loc}{\mathrm{loc}}
\newcommand{\id}{\mathrm{id}}
\newcommand{\FPdim}{\mathrm{FPdim}}
\newcommand{\fp}{\mathfrak{p}}
\newcommand{\fu}{\mathfrak{u}}
\newcommand{\ra}{\vee}
\newcommand{\Wns}{\mathcal{W}^{\mathrm{ns}}}
\newcommand{\bA}{\bar{A}}
\newcommand{\sloc}{\text{s-loc}}
\newcommand{\mon}{\mathrm{mon}}
\newcommand{\gr}{\mathrm{gr}}
\newcommand{\ev}{\mathrm{ev}}
\newcommand{\coev}{\mathrm{coev}}

\begin{document}

\title{Commutative exact superalgebras and super-modular tensor categories}

\author{A. Zuevsky}

\address{Institute of Mathematics, Czech Academy of Sciences, \v{Z}itn\'{a} 25, Prague, Czech
		Republic}
\email{zuevsky@yahoo.com}

\begin{abstract}
We extend the theory of commutative exact algebras in braided finite tensor categories 
(Shimizu-Yadav) to superalgebras, motivated by extensions of vertex operator (super)algebras.
 For a braided category $\C$ let $\B=\C^{\gr}$ be the category of $\mathbb Z/2$-graded objects 
with the Koszul-signed braiding. It is the underlying category of the supercategory of
Creutzig-Kanade-McRae. A supercommutative superalgebra $A=A_0\oplus A_1$ in $\C$ is the same
 as a commutative algebra $\bA$ in $\B$, and super-local modules are the local $\bA$-modules.
 Hence rigidity, pivotality and ribbon statements are instances of the ordinary theorems, 
with the ribbon condition $\theta_{A_0}=\id$, $\theta_{A_1}=\varepsilon\,\id$, where 
$\varepsilon=\pm1$ is fixed by the statistics of the odd part. For $\C$ non-degenerate 
and $\bA$ haploid and exact we prove 
 that the category of super-local modules is a braided finite tensor category 
whose M\"uger center is $\mathsf{sVect}$. It is the centralizer of a fermion in a 
non-degenerate category obtained by 
condensing in $\C\boxtimes(\text{toric code})$. If $\C$ is ribbon and $\bA$ is 
symmetric Frobenius it is ribbon. We construct examples from graded simple currents, 
treat the free fermion and the symplectic fermion,
 and compare with Creutzig-Kanade-McRae and 
Creutzig-McRae-Shimizu-Yadav.
\end{abstract}

\keywords{Commutative exact superalgebras, Super-modular tensor categories, 
Braided finite tensor categories, Vertex operator superalgebras, 
Super-local modules.}

\subjclass[2020]{Primary 18M20, 18M40; Secondary 17B69, 16W55.}

\maketitle

\section{Introduction}

The paper \cite{SY26} determines when the category $\C_A^{\loc}$ of local modules over 
a commutative exact algebra $A$ in a braided finite tensor category $\C$ is rigid, pivotal, 
ribbon, or a (possibly non-semisimple) modular tensor category. The motivating example 
is a vertex operator algebra (VOA) extension $U\subset W$, with $\C=\Rep(U)$, $A=W$ and 
$\C_A^{\loc}\simeq\Rep(W)$ \cite{HKL15, CKM24}. For vertex operator superalgebras (VOSAs) 
a VOSA extension of $U$ is a supercommutative superalgebra in a category of $U$-modules, 
and the category of its (untwisted) modules is the category of local modules 
\cite{CKL20, CKM24}. The semisimple theory is given in \cite{KO02, CKM24}. 
 Rigidity for non-semisimple categories is the subject of \cite{CMSY25}, whose Section 4.3 
treats VOSA extensions.

Let us describe our approach. 
Following \cite{CKM24}, a superalgebra in $\C$ is an algebra in the category of graded 
objects, and the Koszul signs are placed in the braiding of that category 
(our $c^{\mathrm s}$, Section \ref{secgraded}). With this convention a superalgebra 
is an ordinary commutative algebra $\bA$ in an ordinary braided category $\B$, 
 thus \cite{SY26} applies to $\bA$. But modularity needs extra care. 
 $\B$ contains the transparent fermion $\unit\boxtimes f$, 
thus non-degeneracy is replaced by the condition that the M\"uger center is  $\mathsf{sVect}$. 
We prove this by embedding $\B$ in a modular category constructed with the toric code.

\subsection{Main results}
All notation is introduced in Section \ref{secfound}. $\B=\C^{\gr}$ is the graded category, 
$\bA=(A_0,A_1)$ the algebra in $\B$ attached to a superalgebra $A$, $\varepsilon\in\{\pm1\}$ 
the twist of the fermion, and $\C_A^{\sloc}:=\B_{\bA}^{\loc}$.

\begin{theorem} 
\label{thmAs}
Let $\C$ be an additive rigid braided category with equalizers and coequalizers, 
and $A$ a supercommutative superalgebra in $\C$ (Def. \ref{defsuperalgebra}).
\begin{enumerate}

\item[(a)] If ${}_{\bA}\B_{\bA}$ is rigid, then $\C_A^{\sloc}$ is rigid.

\item[(b)] If $\C$ is pivotal, ${}_{\bA}\B_{\bA}$ is rigid, and $\fu_{A_0}=\fp_{A_0}$, 
$\fu_{A_1}=\varepsilon\,\fp_{A_1}$ (Drinfeld isomorphism $\fu$ and pivotal structure 
$\fp$ of $\C$), then $\C_A^{\sloc}$ is pivotal.

\item[(c)] If $\C$ is ribbon, ${}_{\bA}\B_{\bA}$ is rigid, $\theta_{A_0}=\id$, 
$\theta_{A_1}=\varepsilon\,\id$ and $\bA^*\in(\C_A^{\sloc})'$, then $\C_A^{\sloc}$ is ribbon.
\end{enumerate}
\end{theorem}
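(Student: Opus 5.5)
The plan is to reduce every part of Theorem \ref{thmAs} to the corresponding ordinary statement of \cite{SY26} applied to the commutative algebra $\bA$ in the braided category $\B=\C^{\gr}$. The translation is already built into the definitions: $A$ supercommutative means exactly that $\bA$ is commutative for the Koszul-signed braiding $c^{\mathrm s}$, and $\C_A^{\sloc}=\B_{\bA}^{\loc}$. The only work is to check three things: that $\B$ inherits the structural hypotheses from $\C$, and then that the hypotheses on $A_0,A_1$ in (b) and (c) are precisely the hypotheses of \cite{SY26} written out in components.

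First, $\B$ inherits what is needed from $\C$. Objects are pairs $(X_0,X_1)$. The tensor product is $(X\otimes Y)_k=\bigoplus_{i+j=k}X_i\otimes Y_j$ and the braiding is $c^{\mathrm s}=(-1)^{ij}c$ on components. Additivity, equalizers and coequalizers are computed componentwise. Duals are $(X_0^*,X_1^*)$ with componentwise evaluation and coevaluation, so $\B$ is an additive rigid braided category with equalizers and coequalizers. For (a) one then invokes the rigidity theorem of \cite{SY26}: if ${}_{\bA}\B_{\bA}$ is rigid, then $\B_{\bA}^{\loc}$ is rigid. This gives (a) at once.

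For (b), equip $\B$ with the componentwise pivotal structure $\fp^{\B}_X=\fp_{X_0}\oplus\fp_{X_1}$. Next compute the Drinfeld isomorphism $\fu^{\B}$ of $\B$. It is built from the braiding $c^{\mathrm s}_{X,X^*}$ composed with evaluation/coevaluation. On the odd component the braiding $X_1\otimes X_1^*$ carries a sign $-1$, so $\fu^{\B}_X=\fu_{X_0}\oplus(-\fu_{X_1})$. The hypothesis of \cite{SY26} for pivotality of the local modules is $\fu^{\B}_{\bA}=\fp^{\B}_{\bA}$, or its version twisted by the fixed normalization of the fermion. In components this reads $\fu_{A_0}=\fp_{A_0}$ and $\fu_{A_1}=\varepsilon\fp_{A_1}$. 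The step to be careful about is how $\varepsilon$ enters. The sign $-1$ coming from the odd braiding should combine with the choice of pivotal structure on $\B$ (possibly $\fp_{X_0}\oplus\varepsilon'\fp_{X_1}$, the choice making $\B$ ribbon with fermion twist $\varepsilon$) to give exactly $\varepsilon$. I would fix that choice as in Section \ref{secgraded} and verify the sign in one line.

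For (c), the ribbon twist on $\B$ is $\theta^{\B}_X=\theta_{X_0}\oplus\varepsilon\theta_{X_1}$; this is the definition that makes $\varepsilon$ the twist of $\unit\boxtimes f$. One should double-check the balancing axiom. The double braiding on odd$\otimes$odd picks up $(-1)^2=1$, and mixed parity terms pick up $\varepsilon^2$ or $\varepsilon\cdot 1$ consistently. Hence $\theta^{\B}$ is balanced iff $\theta$ is, and it is compatible with duals. Then $\theta^{\B}_{\bA}=\id$ is exactly $\theta_{A_0}=\id$ together with $\theta_{A_1}=\varepsilon\id$. With $\bA^*\in(\B_{\bA}^{\loc})'$, the ribbon theorem of \cite{SY26} yields that $\C_A^{\sloc}$ is ribbon.

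The main obstacle is the sign bookkeeping in (b) and (c). One must confirm that the pivotal/ribbon structure chosen on $\B$ is genuinely compatible, i.e. that the twist on $\B$ equals $\fu^{\B}$ composed with $\fp^{\B}$ up to the standard convention. Only then do the \cite{SY26} hypotheses translate to the stated conditions with a single consistent $\varepsilon$, rather than $-\varepsilon$ on one of them.
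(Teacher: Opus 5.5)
Your proposal follows the paper's proof: $\B$ is rigid braided with componentwise (co)equalizers, Proposition \ref{propbijection} gives that $\bA$ is commutative in $\B$ and $\C_A^{\sloc}=\B_{\bA}^{\loc}$, and (a)--(c) are the rigidity, pivotality and ribbon results of \cite{SY26} applied to $(\B,\bA)$. The sign you leave open is $\varepsilon'=-\varepsilon$: the paper takes $\fp^\varepsilon=(-\varepsilon)^{i}\fp_{X_i}$ on $X_i$. It defines this directly, since in (b) there is no twist from which to derive it, and it is monoidal because $(-\varepsilon)^2=1$. Combined with $\fu^{\mathrm s}=(-1)^{i}\fu_{X_i}$, the condition $\fu^{\mathrm s}_{\bA}=\fp^\varepsilon_{\bA}$ becomes exactly $\fu_{A_0}=\fp_{A_0}$ and $\fu_{A_1}=\varepsilon\fp_{A_1}$, and in the ribbon case $\fp^\varepsilon=\fu^{\mathrm s}\theta^{\mathrm s}$ (Lemma \ref{lemdualityB}).
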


\begin{definition}
\label{defsupernd}

A braided finite tensor category $\D$ with a distinguished invertible object $f$, 
$f\otimes f\cong\unit$, $c_{f,f}=-\id$, is \emph{super-non-degenerate} if its M\"uger center 
$\D'$ equals $\langle\unit,f\rangle\simeq\mathsf{sVect}$ (a property of the braided category 
with the object $f$). If $\D$ is moreover ribbon with $\theta_f=\varepsilon\,\id$, 
it is called \emph{$\varepsilon$-super-modular} (this includes the choice of the ribbon 
structure). 
\end{definition}

Since $\theta_f=q(f)\dim f=-\dim f$ (Remark \ref{remtheta-vs-c}), 
$\dim f=-\varepsilon$. Thus, $(-1)$-super-modular categories have $\dim f=1$ and, 
when spherical, are the super-modular categories of \cite{BGHNPRW17}. 
$(+1)$-super-modular categories have $\dim f=-1$ and are not spherical.

\begin{theorem}
\label{thmBs}

Let $\C$ be a braided finite tensor category over an algebraically closed field of 
characteristic $0$ and $A$ a haploid supercommutative superalgebra with $\bA$ exact. Then 
\begin{enumerate}

\item[(0)] $\C_A^{\sloc}$ is a braided finite tensor category. 

\item[(1)] If $\C$ is non-degenerate (braided, no ribbon structure needed), then 
$\C_A^{\sloc}$ is super-non-degenerate with fermion $\tilde f:=\Pi\bA=(A_1,A_0)$.
 More precisely, there are a non-degenerate braided finite tensor category $\N$ and
 an invertible $\tilde f\in\N$ with 
$\C_A^{\sloc}=\mathcal Z_{(2)}(\langle\tilde f\rangle\subset\N)$ and 
$\FPdim\N=2\,\FPdim\C_A^{\sloc}$.

\item[(2)] If in addition $\C$ is ribbon (hence an MTC) and $\bA$ is symmetric Frobenius, 
i.e., Frobenius with $\theta_{A_0}=\id$, $\theta_{A_1}=\varepsilon\id$, then $\N$ 
is a modular tensor category and $\C_A^{\sloc}$ is $\varepsilon$-super-modular.
\end{enumerate}
\end{theorem}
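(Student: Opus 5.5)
We transport everything to the graded category and then apply the ordinary theory of \cite{SY26} to the commutative algebra $\bA$ in $\B=\C^{\gr}$. Recall that $\B=\C^{\gr}\simeq\C\boxtimes\sVect$ as braided categories, via $(X_0,X_1)\mapsto X_0\boxtimes\unit\oplus X_1\boxtimes f$, with the Koszul-signed braiding $c^{\mathrm s}$. Hence $\B$ is a braided finite tensor category over $\kk$, and $\bA$ is a haploid exact commutative algebra in $\B$. Haploidness is inherited because $\Hom_\B(\unit,\bA)=\Hom_\C(\unit,A_0)$.

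\textbf{Part (0).} The main theorem of \cite{SY26} for commutative exact haploid algebras gives that $\B_{\bA}^{\loc}$ is a braided finite tensor category. By definition this category is $\C_A^{\sloc}$.

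\textbf{Part (1).} Here $\B=\C\boxtimes\sVect$ is not non-degenerate: its Müger center is $\C'\boxtimes\sVect=\sVect$, generated by $\unit\boxtimes f$. I would embed $\sVect$ into the toric code $\T=\Z(\Vect_{\mathbb Z/2})$, with simples $\unit,e,m,f=e\otimes m$. The embedding is $\sVect=\langle f\rangle\subset\T$, and $\langle f\rangle'=\langle f\rangle$ inside $\T$. Set $\mathcal M:=\C\boxtimes\T$. It is non-degenerate because $\C$ and $\T$ are, and it contains $\B$ as the centralizer $\langle\unit\boxtimes f\rangle'$.

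Now regard $\bA$ as a commutative exact haploid algebra in $\mathcal M$, and put $\N:=\mathcal M_{\bA}^{\loc}$. By the non-degenerate case of \cite{SY26}, $\N$ is a non-degenerate braided finite tensor category with
\[
\FPdim\N=\FPdim\mathcal M/\FPdim(\bA)^2 .
\]
The object $\tilde f:=\bA\otimes(\unit\boxtimes f)=\Pi\bA$ is a local $\bA$-module. It is invertible, since induction is monoidal on the transparent subcategory. It is fermionic: $c_{\tilde f,\tilde f}=-\id$ and $\tilde f\not\cong\bA$, because $A_1\neq A_0$ when $A_1\neq0$; if $A_1=0$ the parity shift still distinguishes the two.

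The key identification is
\[
\N\supseteq\Z_{(2)}\bigl(\langle\tilde f\rangle\subset\N\bigr)=\B_{\bA}^{\loc}.
\]
A local $\bA$-module $M$ in $\mathcal M$ centralizes $\tilde f$ iff its underlying object centralizes $\unit\boxtimes f$, since $\bA$ centralizes it. That holds iff $M$ lies in $\C\boxtimes\langle f\rangle=\B$. Then $\Z_{(2)}(\tilde f)$ has Müger center $\langle\tilde f\rangle\cap\langle\tilde f\rangle'=\langle\tilde f\rangle$, because double centralizers hold in the non-degenerate $\N$ (Shimizu's double centralizer theorem). For the dimension count, the centralizer formula $\FPdim\N=\FPdim\langle\tilde f\rangle\cdot\FPdim\Z_{(2)}(\langle\tilde f\rangle)$ gives $\FPdim\N=2\,\FPdim\C_A^{\sloc}$. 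This proves super-non-degeneracy.

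\textbf{Part (2).} Here I would put on $\T$ a ribbon structure with $\theta_f=\varepsilon$. For $\varepsilon=-1$ this is the standard one. For $\varepsilon=+1$ I would choose the non-spherical ribbon structure twisted by the grading character, so that $\dim e=\dim m$-type signs give $\theta_e\theta_m=-\varepsilon$; I would check this carefully via Remark \ref{remtheta-vs-c}. Then $\mathcal M$ is an MTC. The conditions on $\bA$ translate exactly into $\theta_{\bA}=\id$ in $\mathcal M$ together with symmetric Frobenius, so the ribbon/modular part of \cite{SY26} makes $\N$ modular. Theorem \ref{thmAs}(c) then gives ribbon on $\C_A^{\sloc}$, with $\theta_{\tilde f}=\theta_{A_1}\oplus\theta_{A_0}$ adjusted by the fermion twist, which yields $\varepsilon\,\id$.

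\textbf{Main obstacle.} I expect the hardest step to be the identification of $\Z_{(2)}(\tilde f)$ with $\B_{\bA}^{\loc}$, together with the Müger center computation in the non-semisimple setting. This needs that locality in $\mathcal M$ restricted to modules supported in $\B$ is the same as locality in $\B$, and it needs the double centralizer theorem for non-degenerate finite braided categories.
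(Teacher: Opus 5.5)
Your proposal is essentially the paper's proof: $\B$ is realized as the centralizer of $\unit\boxtimes\psi$ in $\C\boxtimes\T^\varepsilon$ (Lemma \ref{lemtoric}), $\iota(\bA)$ is condensed there to give the non-degenerate $\N$, $\C_A^{\sloc}$ is identified with the centralizer of $\tilde f=\psi\otimes\iota(\bA)$ because the monodromy of a local module $M$ with an induced module $F(X)$, for $X$ centralizing the algebra, is conjugate to the monodromy of $M$ with $X$ (this is the step you flag as the main obstacle, and it is Lemma \ref{leminduction}(iii)), and the double-centralizer and FP-dimension formulas finish (1), while for $\varepsilon=+1$ the twist is $\theta'_X=\theta_X\mon_{e,X}$, which gives $\theta_e\theta_m=-1$ as you want (with $\dim e=1$, $\dim m=-1$, so not ``$\dim e=\dim m$''). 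Two small repairs: deduce $\tilde f\not\cong\unit_{\N}$ from $c_{\tilde f,\tilde f}=-\id\neq\id=c_{\unit,\unit}$ (Lemma \ref{leminduction}(iv)) rather than from ``$A_1\neq A_0$'', and if you invoke Theorem \ref{thmAs}(c) you must also check $\bA^*\in(\C_A^{\sloc})'$ (true, since $\bA^*\cong\bA$ for Frobenius $\bA$), although restricting the twist of the MTC $\N$ to the centralizer, as the paper does, avoids this.
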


In $(0)$, parts (a)-(c) 
of Theorem  \ref{thmAs} hold with the hypothesis ``${}_{\bA}\B_{\bA}$ rigid'' dropped 
(it follows from exactness). The other hypotheses (on $\fu,\fp,\theta$ and $\bA^*$) remain.

\begin{corollary}
\label{corext}
Under (2), $\C_A^{\sloc}$ has an explicit modular extension $\N$ of twice its 
Frobenius-Perron dimension.
\end{corollary}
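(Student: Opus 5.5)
The corollary is a direct consequence of part (2) of Theorem \ref{thmBs}, so I will mostly unpack what that part gives. Assume the hypotheses of (2): $\C$ is a modular tensor category, $A$ is haploid, $\bA$ is exact and symmetric Frobenius. Theorem \ref{thmBs}(1) then supplies a non-degenerate braided finite tensor category $\N$ and an invertible $\tilde f\in\N$ with
\[
\C_A^{\sloc}=\mathcal Z_{(2)}(\langle\tilde f\rangle\subset\N),\qquad \FPdim\N=2\,\FPdim\C_A^{\sloc}.
\]
Part (2) adds that $\N$ is modular and that $\C_A^{\sloc}$ is $\varepsilon$-super-modular. The claim is that $\N$ is a modular extension of $\C_A^{\sloc}$: a modular category containing $\C_A^{\sloc}$ as a braided full subcategory whose M\"uger centralizer in $\N$ is exactly the M\"uger center $(\C_A^{\sloc})'=\langle\unit,\tilde f\rangle$. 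The factor $2$ then means that $\N$ is a minimal modular extension.

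The steps I would check are these.

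First, the inclusion $\C_A^{\sloc}\hookrightarrow\N$ is a braided (indeed ribbon) full embedding. This holds because a M\"uger centralizer is a full braided subcategory, and the ribbon structure is restricted from $\N$. One point needs care: the $\varepsilon$-super-modular ribbon structure of $\C_A^{\sloc}$ from (2) should coincide with the restricted one. I expect this to follow from how $\N$ was built, by condensing in $\C\boxtimes(\text{toric code})$, where twists are computed componentwise.

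Second, I would use the double centralizer theorem in the non-degenerate category $\N$, valid in the non-semisimple finite setting (Shimizu). It gives
\[
\mathcal Z_{(2)}(\C_A^{\sloc}\subset\N)=\langle\tilde f\rangle=\langle\unit,\tilde f\rangle\simeq\sVect,
\]
which matches $(\C_A^{\sloc})'$ from (1). It also gives the dimension formula $\FPdim\langle\tilde f\rangle\cdot\FPdim\C_A^{\sloc}=\FPdim\N$. Since $\tilde f\not\cong\unit$ and $\tilde f^{\otimes2}\cong\unit$, we have $\FPdim\langle\tilde f\rangle=2$, which recovers the stated factor independently.

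Finally, \emph{explicit} refers to the construction of $\N$ in the proof of Theorem \ref{thmBs}, namely local modules over the condensed algebra in $\C\boxtimes(\text{toric code})$. Citing it suffices.

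The only potential obstacle is the compatibility of ribbon structures in the first step. Everything else is immediate from Theorem \ref{thmBs} and the double centralizer theorem.
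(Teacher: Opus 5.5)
Your proposal is correct and is essentially the paper's argument: the corollary is read off from Theorem \ref{thmBs}(1)--(2) with $\N=\widetilde\B^{\loc}_{\iota(\bA)}$. The ribbon compatibility you flag is exactly what the proof of part (2) verifies: since $\theta_{\iota(\bA)}=\id$, Lemma \ref{leminduction}(v) gives that $\N$ carries the twist induced from $\widetilde\B$, and on $\C_A^{\sloc}$ this twist is $\iota(\theta^{\mathrm s})$ by Lemma \ref{lemtoric}. Your double-centralizer identification $\mathcal Z_{(2)}(\C_A^{\sloc}\subset\N)=\langle\tilde f\rangle$ is a harmless extra check; the paper obtains the factor $2$ directly from the same FP-dimension formula, applied to $\langle\tilde f\rangle$ instead.
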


We also give graded simple currents (Proposition  \ref{propsimple-current}), examples 
(Section \ref{secexamples}), the VOSA dictionary (Section \ref{secVOSA}),
 Witt classes (Section \ref{secWitt}), a comparison with \cite{CKM24, CMSY25} 
(Section \ref{seccomparison}). In Section \ref{seccomparison} we summarize the results of 
 each statement. 

 The new content is this final version are (Proposition  \ref{propbijection}), 
Lemma \ref{leminduction} with its consequences for the M\"uger center and the modular 
extension (Theorem  \ref{thmBs} (1),(2)), and the verification of signs, 
twists and dimensions in examples. Theorem \ref{thmAs} and Theorem 
\ref{thmBs} (0) are consequences of \cite{SY26, CMSY25} applied to $(\B,\bA)$.

\section{Foundations}
\label{secfound}

\begin{convention}
Finite tensor categories (FTCs) are over an algebraically closed field $\kk$ of 
characteristic $0$. Theorem \ref{thmAs} needs only $\mathrm{char}\,\kk\neq2$.  
We follow \cite{SY26} for notions of 
closed, rigid, pivotal, ribbon categories ($\fp=\fu\theta$),
 the Drinfeld isomorphism $\fu$, central algebras, $\C_A$, ${}_A\C_A$, $\C_A^\sigma$, 
Frobenius and symmetric Frobenius algebras, exact algebras, haploid algebras, 
M\"uger center $\C'$, centralizers 
$\mathcal Z_{(2)}(\D\subset\C)=\{X\mid c_{Y,X}c_{X,Y}=\id\ \forall Y\in\D\}$, 
non-degenerate braided FTCs, and MTCs ($=$ non-degenerate ribbon FTCs). 
We write $\mon_{X,Y}=c_{Y,X}\circ c_{X,Y}$. We use right modules. For local modules 
the left and right conventions agree (Section \ref{secinduction}).
\end{convention}

\subsection{Quadratic data of invertible objects}
For an invertible object $X$ of a braided category, $X\otimes X$ is simple (if $\unit$ is) 
and $c_{X,X}=q(X)\,\id_{X\otimes X}$ for a scalar $q(X)$.

\begin{remark}
\label{remtheta-vs-c}
In a ribbon category, $\theta_X=\mathrm{ptr}(c_{X,X})$, the partial trace taken with the 
pivotal structure $\fp=\fu\theta$ (the convention of \cite{SY26}). Hence 
$\theta_X=q(X)\dim X$ for invertible $X$. With the opposite convention $\theta^{-1}$ 
in place of $\theta$ nothing changes below, since only $\theta=\pm1$ occurs. 
Thus, $c_{X,X}$ and $\theta_X$ are independent data unless $\dim X$ is fixed. 
Balancing $\theta_{X\otimes Y}=(\theta_X\otimes\theta_Y)\mon_{X,Y}$ gives only 
$\theta_{X\otimes X}=\theta_X^2q(X)^2$.
\end{remark}

\subsection{The fermion $\mathsf{sVect}^\varepsilon$}
$\sVect$ is the category of finite-dimensional $\mathbb Z/2$-graded vector spaces 
with braiding $c(v\otimes w)=(-1)^{|v||w|}w\otimes v$; $f$ is the odd line, 
$f\otimes f\cong\unit$, $c_{f,f}=-\id$, $f$ transparent. For $\varepsilon\in\{\pm1\}$ 
let $\sVect^\varepsilon$ be $\sVect$ with the ribbon structure $\theta_\unit=\id$, 
$\theta_f=\varepsilon\id$ (balancing holds since $\mon_{f,f}=\id$). By Remark 
\ref{remtheta-vs-c}, $\dim f=-\varepsilon$.

\subsection{The graded category}
\label{secgraded}
Let $\C$ be an additive braided monoidal category (finite direct sums exist and 
$\otimes$ is additive in each variable. If $\C$ is rigid the latter is automatic 
since $X\otimes-$ has adjoints). Define $\B=\C^{\gr}$ by the following:

\textit{1.} \; objects are pairs $X=(X_0,X_1)$ of objects of $\C$; $|x|\in\mathbb Z/2$ denotes 
the degree of a component;

 \textit{2.} \; morphisms $X\to Y$ are pairs $(f_0,f_1)$ of morphisms of $\C$;

 \textit{3.} \; $(X\otimes Y)_k=\bigoplus_{i+j=k}X_i\otimes Y_j$, unit $(\unit,0)$, associator 
and unitors induced from $\C$;

 \textit{4.} \; $c^{\mathrm s}_{X,Y}=(-1)^{ij}c_{X_i,Y_j}$ on $X_i\otimes Y_j\to Y_j\otimes X_i$;

 \textit{5.}\; if $\C$ has a twist $\theta$ and $\varepsilon\in\{\pm1\}$ is fixed: 
$\theta^{\mathrm s}=\varepsilon^{i}\theta_{X_i}$ on the component $X_i$;

 \textit{6.}\; if $\C$ is rigid, $X^*=(X_0^*,X_1^*)$ with $\ev_X=\ev_{X_0}\oplus\ev_{X_1}$ and 
$\coev_X=\coev_{X_0}\oplus\coev_{X_1}$ (with no signs. These are morphisms of degree $0$).

Then $\C^{\gr}$ is $\C\boxtimes\sVect$ whenever the Deligne product exists. It has equalizers 
and coequalizers if $\C$ does (componentwise), and is rigid if $\C$ is. The hexagons for 
$c^{\mathrm s}$ follow from those of $c$ since $(-1)^{ij}$ is bilinear in the degrees; 
$\theta^{\mathrm s}$ satisfies balancing since 
$\varepsilon^{i+j}=\varepsilon^i\varepsilon^j$ and Lemma \ref{lemmonodromy}. 
Thus, $\B$ is braided (ribbon if $\C$ is), and $(0,\unit)$ is a transparent invertible 
object $f$ with $f\otimes f\cong\unit$, $c^{\mathrm s}_{f,f}=-\id$, 
$\theta^{\mathrm s}_f=\varepsilon$.

\begin{lemma}
\label{lemmonodromy}
$\mon^{\mathrm s}_{X,Y}=\mon_{X,Y}$ componentwise.
\end{lemma}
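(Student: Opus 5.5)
The plan is to compute $\mon^{\mathrm s}_{X,Y}=c^{\mathrm s}_{Y,X}\circ c^{\mathrm s}_{X,Y}$ directly from item \textit{4} of the definition of $\B=\C^{\gr}$. Since the tensor product in $\B$ is defined componentwise by direct sums, it suffices to check the identity on each summand $X_i\otimes Y_j$ of $(X\otimes Y)_{i+j}$. Both $c^{\mathrm s}_{X,Y}$ and $c^{\mathrm s}_{Y,X}$ are direct sums of their restrictions to such summands. Concretely, $c^{\mathrm s}_{X,Y}$ maps $X_i\otimes Y_j$ to $Y_j\otimes X_i$, which is a summand of $(Y\otimes X)_{i+j}$.

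On the summand $X_i\otimes Y_j$ we have $c^{\mathrm s}_{X,Y}=(-1)^{ij}c_{X_i,Y_j}$. The second braiding $c^{\mathrm s}_{Y,X}$ restricted to $Y_j\otimes X_i$ equals $(-1)^{ji}c_{Y_j,X_i}$. Composing these gives
\[
\mon^{\mathrm s}_{X,Y}\big|_{X_i\otimes Y_j}=(-1)^{ij}(-1)^{ji}\,c_{Y_j,X_i}\circ c_{X_i,Y_j}=(-1)^{2ij}\mon_{X_i,Y_j}=\mon_{X_i,Y_j}.
\]
Summing over $(i,j)$ then identifies $\mon^{\mathrm s}_{X,Y}$ with $\bigoplus_{i,j}\mon_{X_i,Y_j}$. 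This is the precise sense of ``componentwise''.

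There is no real obstacle; the whole proof is this sign computation. The one point to check is that the sign convention attached to $c^{\mathrm s}_{Y,X}$ on $Y_j\otimes X_i$ is again $(-1)^{ji}$. This holds because the sign $(-1)^{ij}$ is symmetric in $i$ and $j$, so the two Koszul signs cancel regardless of order. Additivity of $\otimes$ in $\C$ ensures that these restrictions to summands determine the morphisms.
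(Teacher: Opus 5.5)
Your proof is correct and is the same argument as the paper's: on each summand $X_i\otimes Y_j$, $\mon^{\mathrm s}$ is the composite $c^{\mathrm s}_{Y_j,X_i}\circ c^{\mathrm s}_{X_i,Y_j}$, and the two Koszul signs $(-1)^{ij}$ cancel. Your remark about additivity of $\otimes$ only makes explicit why checking on summands is enough.
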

\begin{proof}
$\mon^{\mathrm s}$ is the composite of 
$c^{\mathrm s}_{X_i,Y_j}$ and $c^{\mathrm s}_{Y_j,X_i}$. The two signs $(-1)^{ij}$ cancel.
\end{proof}

The following lemma concens the duality data of $\B$. 

\begin{lemma}
\label{lemdualityB}

Let $\C$ be rigid braided. (i) The Drinfeld isomorphism of $\B$ is 
$\fu^{\mathrm s}=(-1)^{i}\fu_{X_i}$ on $X_i$. (ii) If $\fp$ is a pivotal structure of $\C$, 
then $\fp^\varepsilon:=(-\varepsilon)^{i}\fp_{X_i}$ is a pivotal structure of $\B$ 
($\fp^\varepsilon=\fp$ for $\varepsilon=-1$). (iii) If $\C$ is ribbon and $\fp=\fu\theta$, 
then $\fp^\varepsilon=\fu^{\mathrm s}\theta^{\mathrm s}$. (iv) $\dim_{\B}f=-\varepsilon$.
\end{lemma}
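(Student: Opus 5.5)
The plan is to compute everything componentwise from the definitions in Section \ref{secgraded}. All structure maps of $\B$ are diagonal in the grading, with the only sign entering through $c^{\mathrm s}$. I use the convention of \cite{SY26} for the Drinfeld isomorphism: $\fu_X\colon X\to X^{**}$ is the composite
\[
X\to X\otimes X^*\otimes X^{**}\xrightarrow{c_{X,X^*}}X^*\otimes X\otimes X^{**}\to X^{**},
\]
that is, $(\ev_X\otimes\id_{X^{**}})\circ(c_{X,X^*}\otimes\id_{X^{**}})\circ(\id_X\otimes\coev_{X^*})$. The inverse-braiding variant is handled identically.

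\emph{Step (i).} Since $\ev,\coev$ in $\B$ are the direct sums of those of $\C$ with no signs (item 6), and $X^{**}=(X_0^{**},X_1^{**})$, the composite defining $\fu^{\mathrm s}_X$ restricted to the summand $X_i$ only involves $c^{\mathrm s}_{X_i,X_i^*}$. The dual $X_i^*$ sits in degree $i$, so item 4 gives $c^{\mathrm s}_{X_i,X_i^*}=(-1)^{i\cdot i}c_{X_i,X_i^*}=(-1)^i c_{X_i,X_i^*}$. Hence $\fu^{\mathrm s}=(-1)^i\fu_{X_i}$ on $X_i$. The only care needed is that cross terms $X_i\otimes X_j^*$ with $i\neq j$ are killed by $\ev$, which pairs only matching components.

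\emph{Step (ii).} A pivotal structure is a monoidal natural isomorphism $\id\Rightarrow(-)^{**}$. Naturality of $(-\varepsilon)^i\fp_{X_i}$ holds because morphisms of $\B$ preserve degree. For monoidality, on $X_i\otimes Y_j\subset(X\otimes Y)_{i+j}$ the sign is $(-\varepsilon)^{i+j}=(-\varepsilon)^i(-\varepsilon)^j$, so monoidality of $\fp$ in $\C$ gives it in $\B$. Here I need that the identification $(X\otimes Y)^{**}\cong X^{**}\otimes Y^{**}$ in $\B$ is the componentwise one, which follows because duals and $\ev,\coev$ carry no signs. For $\varepsilon=-1$ the sign is trivial.

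\emph{Step (iii).} Combining (i) with item 5, on $X_i$ we get $\fu^{\mathrm s}\theta^{\mathrm s}=(-1)^i\fu_{X_i}\,\varepsilon^i\theta_{X_i}=(-\varepsilon)^i\fp_{X_i}=\fp^\varepsilon$, using that $\theta^{\mathrm s}$ is a ribbon twist, as noted after item 6.

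\emph{Step (iv).} With pivotal structure $\fp^\varepsilon$, the dimension of $f=(0,\unit)$ is $\ev_{f^*}\circ(\fp^\varepsilon_f\otimes\id)\circ\coev_f$. This equals $(-\varepsilon)\dim_\C\unit=-\varepsilon$, since $\fp_\unit$ is the canonical isomorphism and the sign $(-\varepsilon)^1$ appears once. Alternatively, Remark \ref{remtheta-vs-c} with $q(f)=-1$ and $\theta_f=\varepsilon$ gives the same value. I expect the main obstacle to be only sign bookkeeping in (i), namely confirming that no additional Koszul sign arises from $\ev,\coev$ or the associators. This is guaranteed by item 6 declaring these to be degree-$0$ maps without signs.
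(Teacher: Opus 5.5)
Your proposal is correct and follows the paper's proof essentially step for step: the single Koszul sign $(-1)^{i\cdot i}$ from $c^{\mathrm s}_{X_i,X_i^*}$ in (i), multiplicativity of $(-\varepsilon)^i$ together with sign-free double-dual identifications in (ii), direct multiplication of signs in (iii), and the sign $-\varepsilon$ on the odd component of $f=(0,\unit)$ in (iv). Your explicit remark that cross terms are killed by $\ev$, and your use of $\ev_{f^*}$ in the dimension formula, are slightly more careful than the paper's notation; note also that your alternative argument for (iv) via Remark \ref{remtheta-vs-c} is available only in the ribbon case.
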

\begin{proof}
(i) With the conventions $\ev_X:X^*\otimes X\to\unit$, $\coev_X:\unit\to X\otimes X^*$,
\[
\fu_X=(\ev_X\otimes\id_{X^{**}})\circ(c_{X,X^*}
\otimes\id_{X^{**}})\circ(\id_X\otimes\coev_{X^*}):X\to X^{**}.
\]
In $\B$ the evaluations and coevaluations are those of $\C$ componentwise, 
and $\fu^{\mathrm s}_X$ involves  one braiding $c^{\mathrm s}_{X,X^*}$. On the degree-$i$ 
component, $X_i$ and $X_i^*$ both have degree $i$, thus 
$c^{\mathrm s}_{X_i,X_i^*}=(-1)^{i\cdot i}c_{X_i,X_i^*}=(-1)^ic_{X_i,X_i^*}$.
(ii) A pivotal structure is a monoidal  isomorphism $\id\Rightarrow(-)^{**}$. 
Naturality for degree-preserving morphisms is clear. The canonical isomorphisms 
$(X\otimes Y)^{**}\cong X^{**}\otimes Y^{**}$ are constructed from $\ev,\coev$ and associators, 
with no braiding and hence no signs. 
Thus, monoidality means 
$\fp^\varepsilon_{X\otimes Y}=\fp^\varepsilon_X\otimes\fp^\varepsilon_Y$ on $X_i\otimes Y_j$, 
i.e., $(-\varepsilon)^{i+j}=(-\varepsilon)^i(-\varepsilon)^j$,  since 
$(-\varepsilon)^2=1$.
(iii) $\fu^{\mathrm s}\theta^{\mathrm s}=(-1)^i\varepsilon^i\fu\theta=(-\varepsilon)^i\fp$.
(iv) $\dim_{\B}f=\ev_{f}\circ(\fp^\varepsilon_f\otimes\id)\circ\coev_{f}$ equals 
$\fp^\varepsilon_f=-\varepsilon$ times $\dim_\C\unit=1$.
\end{proof}

\subsection{Superalgebras}

\begin{definition}
\label{defsuperalgebra}
A \emph{superalgebra} in $\C$ is an algebra $(A,\mu,\eta)$ in $\C$ with a decomposition 
$A=A_0\oplus A_1$ such that $\eta$ factors through $A_0$ and 
$\mu(A_i\otimes A_j)\subset A_{i+j}$. It is \emph{supercommutative} if 
$\mu\circ c_{A,A}=\mu\circ\varsigma$, where $\varsigma\in\mathrm{End}(A\otimes A)$ acts 
as $(-1)^{ij}$ on $A_i\otimes A_j$.
\end{definition}

\begin{proposition}
\label{propbijection}
Set $\bA:=(A_0,A_1)\in\B$. Then: 
\begin{enumerate}

\item[(a)] $A\mapsto\bA$ is a bijection (on isomorphism classes) between superalgebras 
in $\C$ and algebras in $\B$, and $A$ is supercommutative iff $\bA$ is commutative in $\B$.

\item[(b)] Right $\bA$-modules in $\B$ are $\mathbb Z/2$-graded right $A$-modules 
$M=(M_0,M_1)$ in $\C$ with degree-preserving morphisms. $M$ is local iff, with 
the right action $a_M:M\otimes A\to M$ and the endomorphism $\mon:=c_{A,M}\circ c_{M,A}$ 
of $M\otimes A$, one has $a_M\circ\mon=a_M$.

\item[(c)] If $\C$ is ribbon: $\theta^{\mathrm s}_{\bA}=\id$ iff $\theta_{A_0}=\id$ and 
$\theta_{A_1}=\varepsilon\,\id$. If $\C$ is pivotal: 
$\fu^{\mathrm s}_{\bA}=\fp^\varepsilon_{\bA}$ iff $\fu_{A_0}=\fp_{A_0}$ and 
$\fu_{A_1}=\varepsilon\fp_{A_1}$.
\end{enumerate}
\end{proposition}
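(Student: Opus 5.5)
The plan is to unwind the definitions of $\B=\C^{\gr}$ componentwise, checking for each structure that a morphism in $\B$ decomposes into its graded pieces; all signs come from $c^{\mathrm s}$, $\theta^{\mathrm s}$, $\fu^{\mathrm s}$ and $\fp^\varepsilon$.

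For (a), an algebra in $\B$ is an object $\bA=(A_0,A_1)$ with a degree-preserving multiplication $\bar\mu:\bA\otimes\bA\to\bA$ and a unit $\bar\eta:(\unit,0)\to\bA$. Degree preservation splits $\bar\mu$ into components $\mu_{ij}:A_i\otimes A_j\to A_{i+j}$, and forces $\bar\eta$ to be a map $\eta:\unit\to A_0$. Set $A=A_0\oplus A_1$ in $\C$ and let $\mu$ be the sum of the $\mu_{ij}$. The associators and unitors of $\B$ are those of $\C$ restricted to components. Hence associativity and unitality of $(\bA,\bar\mu,\bar\eta)$ are equivalent to the same identities for $(A,\mu,\eta)$ on each summand $A_i\otimes A_j\otimes A_k$, which together give the identities on $A^{\otimes 3}$ by additivity of $\otimes$. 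Conversely, for a superalgebra the conditions $\mu(A_i\otimes A_j)\subset A_{i+j}$ and $\eta$ factoring through $A_0$ say exactly that $\mu$ and $\eta$ have these components. The two constructions are mutually inverse up to the choice of decomposition, which explains the phrase ``on isomorphism classes.'' For commutativity, $\bar\mu\circ c^{\mathrm s}_{\bA,\bA}=\bar\mu$ reads $(-1)^{ij}\mu\circ c_{A_i,A_j}=\mu|_{A_i\otimes A_j}$ on each summand. Multiplying by $(-1)^{ij}$ gives $\mu\circ c_{A,A}=\mu\circ\varsigma$ on $A_i\otimes A_j$, and summing over $i,j$ gives supercommutativity.

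For (b), the argument is the same. A right $\bA$-module is $M=(M_0,M_1)$ with a degree-preserving action $M_i\otimes A_j\to M_{i+j}$. The module axioms are componentwise identities, equivalent to $M_0\oplus M_1$ being a graded right $A$-module. Morphisms in $\B$ are pairs, so module maps are exactly the degree-preserving $A$-module maps. Locality in $\B$ is defined as $a_M\circ\mon^{\mathrm s}_{M,\bA}=a_M$. By Lemma \ref{lemmonodromy}, $\mon^{\mathrm s}=\mon$ componentwise, because the two signs $(-1)^{ij}$ cancel. This gives the stated criterion with the unsigned $\mon=c_{A,M}\circ c_{M,A}$.

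For (c), by definition $\theta^{\mathrm s}_{\bA}$ equals $\theta_{A_0}$ on $A_0$ and $\varepsilon\theta_{A_1}$ on $A_1$. So $\theta^{\mathrm s}_{\bA}=\id$ iff $\theta_{A_0}=\id$ and $\theta_{A_1}=\varepsilon^{-1}\id=\varepsilon\,\id$. For the pivotal statement, Lemma \ref{lemdualityB}(i),(ii) give $\fu^{\mathrm s}=(-1)^i\fu_{A_i}$ and $\fp^\varepsilon=(-\varepsilon)^i\fp_{A_i}$. In degree $0$ the condition $\fu^{\mathrm s}_{\bA}=\fp^\varepsilon_{\bA}$ reads $\fu_{A_0}=\fp_{A_0}$. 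In degree $1$ it reads $-\fu_{A_1}=-\varepsilon\fp_{A_1}$, that is, $\fu_{A_1}=\varepsilon\fp_{A_1}$.

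The only real obstacle is bookkeeping: one must check that the associativity and hexagon coherence data of $\B$ carry no hidden signs. This holds by construction, since only $c^{\mathrm s}$ and $\theta^{\mathrm s}$ were modified. Everything else is componentwise.
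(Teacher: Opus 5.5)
Your proposal is correct and follows the paper's own proof: you unwind the algebra and commutativity conditions componentwise for (a), use the sign cancellation of Lemma~\ref{lemmonodromy} to turn locality in $\B$ into the unsigned condition in (b), and read off the degree-$1$ signs from $\theta^{\mathrm s}$ and Lemma~\ref{lemdualityB} for (c). Your extra bookkeeping fills in what the paper leaves implicit: associativity checked on each $A_i\otimes A_j\otimes A_k$ via additivity of $\otimes$, and the unit forced to land in $A_0$.
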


Note that in $(b)$ we first have $c_{M,A}:M\otimes A\to A\otimes M$, then 
$c_{A,M}:A\otimes M\to M\otimes A$, both braidings of $\C$. 

\begin{proof}
(a) An algebra structure on an object of $\B$ is a graded algebra structure on the pair. 
On $A_i\otimes A_j$ the equation $\mu\circ c^{\mathrm s}_{\bA,\bA}=\mu$ reads 
$(-1)^{ij}\mu c_{A_i,A_j}=\mu$, i.e., $\mu c=\mu\varsigma$.
(b) A right $\bA$-module is a graded object with $a_M:M\otimes\bA\to M$, i.e., 
$M_i\otimes A_j\to M_{i+j}$. The locality equation in $\B$ is 
$a_M\circ c^{\mathrm s}_{\bA,M}\circ c^{\mathrm s}_{M,\bA}=a_M$ on $M\otimes\bA$, 
and by Lemma \ref{lemmonodromy} it is the stated equation.
(c) $\theta^{\mathrm s}$ acts by $\theta_{A_0}$ on $A_0$ and $\varepsilon\theta_{A_1}$ 
on $A_1$; for $\fu^{\mathrm s}=\fp^\varepsilon$ use Lemma \ref{lemdualityB}: 
$(-1)^i\fu_{A_i}=(-\varepsilon)^i\fp_{A_i}$.
\end{proof}

\begin{definition}
\label{defprops}
$A$ is \emph{haploid} ($\Hom_\C(\unit,A_0)\cong\kk$), \emph{exact}, \emph{Frobenius}, 
\emph{central}, if $\bA$ is such as an algebra in $\B$ 
 (thus``$A$ exact'' always means exactness of $\bA$ in $\B$). In a ribbon category
  a commutative Frobenius algebra $\bA$ 
is \emph{symmetric} iff $\theta^{\mathrm s}_{\bA}=\id$ \cite{FFRS06}.  
 \end{definition}

By Proposition  \ref{propbijection} (c) this is $\theta_{A_0}=\id$, $\theta_{A_1}=\varepsilon\id$.
 The sign $\varepsilon$ is part of the data. If $A_1\neq0$ and $\theta_{A_1}$ is a scalar 
$\pm1$, then $\varepsilon:=\theta_{A_1}$ is forced; if $A_1=0$ any $\varepsilon$ works 
and $\C_A^{\sloc}=\C_{A}^{\loc}\boxtimes\sVect^\varepsilon$.

\section{Rigidity, pivotality and ribbon structure}

\begin{proof}[Proof of Theorem \ref{thmAs}]
$\B$ is an additive rigid braided category (hence closed monoidal) with equalizers and 
coequalizers by Lemma \ref{lemdualityB}. $\bA$ is a commutative algebra in $\B$ 
 according to Proposition  \ref{propbijection} (a). Next,  $\C_A^{\sloc}=\B_{\bA}^{\loc}$ 
 due to Proposition  \ref{propbijection} (b). 
 
The hypotheses of the results of \cite{SY26} and 
their verification are:
   Closed monoidal, (co)equalizers (Theorem  3.8),  
 in our formulation $\B$ is rigid with componentwise (co)equalizers; 
  ${}_{\bA}\B_{\bA}$ rigid (Theorem  3.8(b), conclusion: $\B_{\bA}^\sigma,\B_{\bA}, 
\B_{\bA}^{\loc}$ rigid) is our Hypothesis (a); 
  Pivotal structure on $\B$ (Theorem  4.3, 4.4) in our formulation is $\fp^\varepsilon$
 (see Lemma \ref{lemdualityB} (ii)), 
condition $\fu_{\bA}=\fp_{\bA}$ is Proposition  \ref{propbijection} (c); 
  Ribbon $\B$, $\theta_{\bA}=\id$, $\bA^*\in(\B^{\loc}_{\bA})'$ (Theorem  3.15) are our Lemma 
\ref{lemdualityB} (iii), Proposition  \ref{propbijection} (c), Hypothesis.   
This gives (a),(b),(c). \qedhere
\end{proof}
Note that we use only that $\bA$ is an ordinary commutative algebra of $\B$.  
  The condition $\theta^2=\id$ of \cite{SY26}, resp. \cite{CMSY25},  
 determines only whether $\bA^*$ (resp.\ $D\bA$) is local. 

\section{Finite tensor categories and super-modularity}

\subsection{The toric code}
\label{sectoric}
Let $\T=\Z(\Vect_{\mathbb Z/2})$ pointed with $\Inv(\T)=\{\unit,e,m,\psi\}$, 
$\psi=e\otimes m$, trivial associator, $\theta_e=\theta_m=1$, $\theta_\psi=-1$, 
$c_{\psi,\psi}=-\id$, $\mon_{\psi,e}=\mon_{\psi,m}=\mon_{e,m}=-1$, 
$\mon_{e,e}=\mon_{m,m}=1$. It is non-degenerate and ribbon, and this is $\T^{-}$. 
For $\varepsilon=+1$ set $\theta'_X:=\theta_X\mon_{e,X}$. Since monodromy is 
bimultiplicative on invertible objects, $X\mapsto\mon_{e,X}$ is a $\pm1$-valued character. 
 Thus, $\theta'_{X\otimes Y}=\theta_X\theta_Y\mon_{X,Y}\mon_{e,X}\mon_{e,Y}=\theta'_X
\theta'_Y\mon_{X,Y}$ and $\theta'_{X^*}=\theta'_X$. Therefore, $\theta'$ is a ribbon structure, 
with $\theta'_e=1,\theta'_m=-1,\theta'_\psi=+1$ ($\dim\psi=\dim m=-1$, and not spherical). 
  The underlying braided category of $\T^+$ and $\T^-$ is the samem, but only the ribbon 
structure (hence pivotal structure and dimensions) differs (both are non-degenerate). 
in both $\langle\unit,\psi\rangle\simeq\sVect^\varepsilon$ is a full ribbon subcategory 
equal to its  centralizer.

\begin{lemma}
\label{lemtoric}
Let $\C$ be a braided FTC and $\widetilde\B:=\C\boxtimes\T^\varepsilon$. The functor 
$\iota:\B\to\widetilde\B$, $(X_0,X_1)\mapsto X_0\boxtimes\unit\oplus X_1\boxtimes\psi$, 
with tensor structure induced by the chosen isomorphism $\psi\otimes\psi\cong\unit$,  
is a fully faithful braided tensor functor, ribbon if $\C$ is ribbon, and 
$\iota(\B)=\mathcal Z_{(2)}(\unit\boxtimes\psi\subset\widetilde\B)$.
\end{lemma}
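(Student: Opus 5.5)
The plan is to see $\iota$ as the composite of the identification $\B\simeq\C\boxtimes\sVect$ with the inclusion $\C\boxtimes\langle\unit,\psi\rangle\subset\C\boxtimes\T^\varepsilon$. The only content is then that $\langle\unit,\psi\rangle\subset\T^\varepsilon$ is a copy of $\sVect^\varepsilon$, and that its centralizer is itself.

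\emph{Tensor structure and full faithfulness.} Fix $\phi:\psi\otimes\psi\xrightarrow{\sim}\unit$. Because the associator of $\T$ is trivial on invertibles, $\phi$ satisfies the associativity constraint for the three-fold product $\psi^{\otimes 3}$; this is the only cocycle condition to check. The tensor structure $J_{X,Y}:\iota X\otimes\iota Y\to\iota(X\otimes Y)$ on the component $(X_i\boxtimes\psi^i)\otimes(Y_j\boxtimes\psi^j)$ is the identity, except for $i=j=1$, where it is $\id\boxtimes\phi$. Monoidality of $J$ then reduces to the associativity of $\phi$ just noted.

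For full faithfulness, compute
\[
\Hom_{\widetilde\B}(X_0\boxtimes\unit\oplus X_1\boxtimes\psi,\;Y_0\boxtimes\unit\oplus Y_1\boxtimes\psi)=\bigoplus_{i,j}\Hom_\C(X_i,Y_j)\otimes\Hom_\T(\psi^i,\psi^j).
\]
The factor $\Hom_\T(\psi^i,\psi^j)$ is $\kk$ for $i=j$ and $0$ otherwise, since $\unit\not\cong\psi$ are simple. The right-hand side is therefore $\Hom_\C(X_0,Y_0)\oplus\Hom_\C(X_1,Y_1)=\Hom_\B(X,Y)$.

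\emph{Braiding and ribbon.} The braiding of $\widetilde\B$ on $X_i\boxtimes\psi^i$ and $Y_j\boxtimes\psi^j$ is $c_{X_i,Y_j}\boxtimes c_{\psi^i,\psi^j}$. Since $c_{\psi,\psi}=-\id$ and the other braidings among $\unit,\psi$ are trivial, this equals $(-1)^{ij}c_{X_i,Y_j}$, which is $c^{\mathrm s}$. One still has to check that $c_{\psi,\psi}=-\id$ is compatible with $\phi$, i.e.\ that $\phi\circ c_{\psi,\psi}=-\phi$. This holds because $c_{\psi,\psi}$ is the scalar $q(\psi)=-1$, so $\iota$ is braided. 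If $\C$ is ribbon, then $\theta_{X_i\boxtimes\psi^i}=\theta_{X_i}\otimes\theta^\varepsilon_{\psi}{}^{\,i}=\varepsilon^i\theta_{X_i}$, using $\theta_\psi=-1$ in $\T^-$ and $\theta'_\psi=+1$ in $\T^+$, i.e.\ $\theta_\psi=\varepsilon$. This matches $\theta^{\mathrm s}$, so $\iota$ is ribbon.

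\emph{Centralizer.} This is the step requiring care, since $\C$ need not be semisimple. Every object of $\widetilde\B=\C\boxtimes\T$ decomposes canonically as $\bigoplus_{s\in\{\unit,e,m,\psi\}}Z_s\boxtimes s$, because $\T$ is semisimple with these four simples; the $Z_s$ are obtained as $\Hom_\T(s,-)$ applied in the second factor. The double braiding with $\unit\boxtimes\psi$ acts on $Z_s\boxtimes s$ by $\id_{Z_s}\boxtimes\mon_{\psi,s}$, where $\mon_{\psi,s}=1$ for $s\in\{\unit,\psi\}$ and $-1$ for $s\in\{e,m\}$. Since $\mathrm{char}\,\kk\neq2$, the object centralizes $\unit\boxtimes\psi$ iff $Z_e=Z_m=0$, i.e.\ iff it lies in the essential image $\{Z_\unit\boxtimes\unit\oplus Z_\psi\boxtimes\psi\}=\iota(\B)$. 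This completes the proof.
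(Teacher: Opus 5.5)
Your proposal is correct and follows the paper's proof step by step: full faithfulness from $\Hom_\T(\unit,\psi)=0$, the braiding and twist compared componentwise using $c_{\psi,\psi}=-\id$ and $\theta_\psi=\varepsilon$, and the centralizer read off from the canonical decomposition $\bigoplus_t X_t\boxtimes t$, where $\mon_{t,\psi}=-1$ exactly for $t\in\{e,m\}$. Your extra checks only make explicit what the paper leaves tacit: coherence of $J$ via the trivial associator, $\phi\circ c_{\psi,\psi}=-\phi$, semisimplicity of $\T$ as the real reason for the decomposition, and $\mathrm{char}\,\kk\neq2$ to conclude $Z_e=Z_m=0$.
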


Note that the associator of $\langle\unit,\psi\rangle$ is trivial, 
thus this is a tensor structure. 

\begin{proof}
The functor is fully faithful since 
$\Hom_{\C\boxtimes\T}(X_0\boxtimes\unit\oplus X_1\boxtimes\psi,Y_0\boxtimes\unit
\oplus Y_1\boxtimes\psi)=\Hom(X_0,Y_0)\oplus\Hom(X_1,Y_1)$ since $\Hom_\T(\unit,\psi)=0$.
 It is braided, since 
$c_{X\boxtimes\psi,Y\boxtimes\psi}=c_{X,Y}\boxtimes c_{\psi,\psi}=-c_{X,Y}
=c^{\mathrm s}$ on odd-odd, and $c_{X\boxtimes\psi,Y\boxtimes\unit}=c_{X,Y}
\boxtimes c_{\psi,\unit}=c_{X,Y}$. 
It is ribbon since $\theta_{X\boxtimes\psi}=\theta_X\boxtimes\varepsilon=\theta^{\mathrm s}$. 
Centralizer: $\T$ is pointed, thus every object of $\widetilde\B$ is canonically 
$\bigoplus_{t\in\Inv(\T)}X_t\boxtimes t$. The monodromy of $X_t\boxtimes t$ with 
$\unit\boxtimes\psi$ is $\mon_{t,\psi}\cdot\id$, with $\mon_{t,\psi}=-1$ for 
$t\in\{e,m\}$. It is the identity iff $X_e=X_m=0$.
\end{proof}

\subsection{The induction lemma}
\label{secinduction}

Let $\D$ be a braided monoidal category with coequalizers preserved by $\otimes$ in 
each variable (e.g., a finite tensor category; no rigidity, finiteness or semisimplicity 
is needed), and $A$ a commutative algebra in $\D$. For a right module $N$ set 
$\lambda_N:=a_N\circ c_{A,N}:A\otimes N\to N$ (a left action). For a right module 
$M$ and a right module $N$ with $\lambda_N$, $M\otimes_AN$ is the coequalizer of 
$a_M\otimes\id_N$ and $\id_M\otimes\lambda_N$ on 
$M\otimes A\otimes N\rightrightarrows M\otimes N$. Denote by $\pi_{M,N}$ the projection. 
For local $N$, $a_Nc_{A,N}=a_Nc_{N,A}^{-1}$ (apply $c_{N,A}^{-1}$ to $a_N\mon=a_N$). 
 Thus, the left action does not depend on the choice between $c$ and $c^{-1}$ and 
$\D_A^{\loc}$ (with $\otimes_A$) is the same as in \cite{Par95, KO02, CMSY25} 
(where left modules are used). For local $M,N$ the braiding 
$c^A_{M,N}:M\otimes_AN\to N\otimes_AM$ is the map induced by $c_{M,N}$, i.e., 
$c^A_{M,N}\pi_{M,N}=\pi_{N,M}c_{M,N}$ \cite{Par95}.

Let $Z_A:=\{X\in\D\mid c_{A,X}\circ c_{X,A}=\id_{X\otimes A}\}$ (equivalently 
$c_{X,A}c_{A,X}=\id_{A\otimes X}$), $F(X):=X\otimes A$ with right action $\id_X\otimes\mu$.

\begin{lemma}
\label{leminduction}
Let $X$, $Y\in Z_A$ and let $M$ be a local right $A$-module. Then: 
\begin{enumerate}

\item[(i)] $\unit\in Z_A$, $X\otimes Y\in Z_A$, and $F(X)$ is local.

\item[(ii)] There are  isomorphisms $\rho_{X,M}:F(X)\otimes_AM\to X\otimes M$ 
(valid for every right module $M$) and $\sigma_{M,X}:M\otimes_AF(X)\to M\otimes X$, 
given by $\rho\pi=\id_X\otimes\lambda_M$ and 
$\sigma\pi=(a_M\otimes\id_X)(\id_M\otimes c_{X,A})$. 
Moreover $\rho_{X,F(Y)}:F(X)\otimes_AF(Y)\to X\otimes Y\otimes A=F(X\otimes Y)$ 
is an isomorphism of right modules, giving $F$ a monoidal structure on $Z_A$.

\item[(iii)] $\sigma_{M,X}\,c^A_{F(X),M}\,\rho_{X,M}^{-1}=c_{X,M}$ and 
$\rho_{X,M}\,c^A_{M,F(X)}\,\sigma_{M,X}^{-1}=c_{M,X}$. Consequently, under $\sigma$ 
the monodromy $c^A_{F(X),M}c^A_{M,F(X)}$ of $M$ and $F(X)$ in $\D_A^{\loc}$ is the 
monodromy $c_{X,M}c_{M,X}$ of $M$ and $X$ in $\D$ (as an endomorphism of $M\otimes X$).  
 In particular, it is the identity iff $\mon_{M,X}=\id_{M\otimes X}$.

\item[(iv)] $\rho_{Y,F(X)}\,c^A_{F(X),F(Y)}\,\rho_{X,F(Y)}^{-1}=c_{X,Y}\otimes\id_A$.

\item[(v)] If $\D$ is ribbon and $\theta_A=\id$, then $\theta_{F(X)}=\theta_X\otimes\id_A$.
\end{enumerate}
\end{lemma}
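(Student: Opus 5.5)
The plan is to work entirely with the universal property of the coequalizer $\pi_{M,N}$ and the hexagon/naturality axioms of $\D$. All the maps in (ii)--(iv) are defined on $M\otimes N$ before passing to $M\otimes_AN$, so each identity will be checked after precomposing with the epimorphism $\pi$. Because $\otimes$ preserves coequalizers in each variable, $\pi\otimes\id$ and $\id\otimes\pi$ are again epimorphisms, so this reduction is legitimate.

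For (i), $\unit\in Z_A$ is immediate. For $X\otimes Y$ I will use the hexagon identity $\mon_{X\otimes Y,A}=(\id_X\otimes c_{A,Y})(\mon_{X,A}\otimes\id_Y)(\id_X\otimes c_{Y,A})$ up to associators, together with $\mon_{Y,A}=\id$; both factors then collapse. Locality of $F(X)$ is the equation $(\id_X\otimes\mu)\circ\mon_{X\otimes A,A}=\id_X\otimes\mu$. I will expand $\mon_{X\otimes A,A}$ by the hexagon into $\mon_{X,A}$ and $\mon_{A,A}$ pieces. The first is trivial since $X\in Z_A$, and $\mu\circ\mon_{A,A}=\mu$ by commutativity applied twice.

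For (ii), I will show that $\id_X\otimes\lambda_M$ coequalizes the two maps $X\otimes A\otimes A\otimes M\rightrightarrows X\otimes A\otimes M$, which is associativity of $\lambda_M$. The inverse is $X\otimes M\cong X\otimes\unit\otimes M\to X\otimes A\otimes M\xrightarrow{\pi}F(X)\otimes_AM$ via $\eta$. The two composites are checked using the unit axiom on one side and the coequalizer relation on the other; this is the standard free-module argument. Note that $\lambda_M$ is an action because $A$ is commutative, with no locality needed.

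For $\sigma$, the map $(a_M\otimes\id_X)(\id_M\otimes c_{X,A})$ must coequalize $a_M\otimes\id$ and $\id_M\otimes\lambda_{F(X)}$. This is where $X\in Z_A$ and the locality of $M$ enter, since $\lambda_{F(X)}=(\id\otimes\mu)c_{A,X\otimes A}$ drags $A$ past $X$. I expect this sign/braiding bookkeeping to be the main obstacle. I will first use $\mon_{X,A}=\id$ to replace $c_{A,X}$ by $c_{X,A}^{-1}$, and then move everything onto $M\otimes A\otimes A$, where associativity of $a_M$ finishes the check. The inverse is again built from $\eta$.

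The right-module compatibility of $\rho_{X,F(Y)}$ is immediate, because the right action acts only on the last factor. Monoidality of $F$ on $Z_A$ reduces to associativity of $\mu$ and coherence.

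For (iii) and (iv), I precompose with $\pi_{F(X),M}$ and use $c^A\pi=\pi c$. The identity to verify becomes $\sigma_{M,X}\pi_{M,F(X)}c_{X\otimes A,M}=c_{X,M}(\id_X\otimes\lambda_M)$ on $X\otimes A\otimes M$. After expanding $c_{X\otimes A,M}$ by the hexagon and using naturality of $c$ with respect to $a_M$, the left side becomes $c_{X,M}$ composed with $\id_X\otimes a_Mc_{A,M}^{-1}$. For local $M$ this equals $\id_X\otimes\lambda_M$, by the remark preceding the lemma that $a_Mc_{A,M}=a_Mc_{M,A}^{-1}$.

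The second identity in (iii) is proved in the same way with $c^{-1}$. Composing the two identities gives the monodromy statement. Since $\sigma$ is an isomorphism, the monodromy in $\D_A^{\loc}$ is trivial iff $\mon_{M,X}=\id$.

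Part (iv) is (iii) with $M=F(Y)$, identified through $\rho_{Y,F(X)}$ instead of $\sigma$. Alternatively, it can be checked directly on $X\otimes A\otimes Y\otimes A$, using $Y\in Z_A$ to move the middle $A$.

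For (v), the twist on $\D_A^{\loc}$ is the one induced from $\theta$. Balancing gives $\theta_{X\otimes A}=(\theta_X\otimes\theta_A)\mon_{X,A}$, and this equals $\theta_X\otimes\id_A$ because $\theta_A=\id$ and $X\in Z_A$.
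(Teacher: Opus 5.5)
Your proposal is correct and follows essentially the paper's proof. The shared steps are the hexagon computation for (i), the free-module argument with $\eta$-sections for $\rho$ and $\sigma$, checking (iii)--(iv) via $c^A\pi=\pi c$, and balancing for (v); the only difference is that you precompose with $\pi$ where the paper precomposes with $\rho^{-1}=\pi(\id_X\otimes\eta\otimes\id_M)$ and $\sigma^{-1}$. One small correction: in the first identity of (iii), the hexagon and naturality give $c_{X,M}(\id_X\otimes a_Mc_{A,M})=c_{X,M}(\id_X\otimes\lambda_M)$ directly, so locality is not needed there (the expression $a_Mc_{A,M}^{-1}$ you wrote is not well-typed), and likewise the construction of $\sigma$ uses only $X\in Z_A$ and associativity of $a_M$, not locality of $M$.
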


\begin{proof}
We suppress associators and unitors. 

(i) The hexagon axioms give $c_{X\otimes A,A}=(c_{X,A}\otimes\id_A)(\id_X\otimes c_{A,A})$ 
and $c_{A,X\otimes A}=(\id_X\otimes c_{A,A})(c_{A,X}\otimes\id_A)$, hence
\[
\mon_{A,F(X)}=c_{A,X\otimes A}\,c_{X\otimes A,A}=(\id_X\otimes c_{A,A})\,
\big((c_{A,X}c_{X,A})\otimes\id_A\big)\,(\id_X\otimes c_{A,A}).
\]
For $X\in Z_A$ the middle factor is the identity, thus $\mon_{A,F(X)}=\id_X\otimes c_{A,A}^2$, 
and $a_{F(X)}\mon=\id_X\otimes\mu c_{A,A}c_{A,A}=\id_X\otimes\mu=a_{F(X)}$ by commutativity 
applied twice. The same hexagon computation shows $Z_A$ is closed under $\otimes$ and contains 
$\unit$.

(ii) For $\rho$, apply $X\otimes-$ to the diagram 
$A\otimes A\otimes M\rightrightarrows A\otimes M\xrightarrow{\lambda_M}M$ with the two maps 
$\mu\otimes\id_M$ and $\id_A\otimes\lambda_M$. This is a split coequalizer, with sections 
$t=\eta\otimes\id_M:M\to A\otimes M$ and 
$u=\eta\otimes\id_A\otimes\id_M:A\otimes M\to A\otimes A\otimes M$: indeed $\lambda_Mt=\id$, 
$(\mu\otimes\id)u=\id$, $(\id_A\otimes\lambda_M)u=t\lambda_M$, and 
$\lambda_M(\mu\otimes\id_M)=\lambda_M(\id_A\otimes\lambda_M)$ by the module axiom. 
Split coequalizers are absolute, thus $X\otimes-$ preserves it, and $X\otimes M$ with 
projection $\id_X\otimes\lambda_M$ is the coequalizer defining $F(X)\otimes_AM$. $\rho$ 
is a right module map for $M$ with the action $\id_X\otimes a_M$ since 
$\lambda_M(\id_A\otimes a_M)=a_M(\lambda_M\otimes\id_A)$. This uses 
$c_{A,M}(\id_A\otimes a_M)=(a_M\otimes\id_A)c_{A,M\otimes A}$ (naturality) and 
commutativity of $A$.

For $\sigma$,  the left action on $F(X)$ is 
$\lambda_{F(X)}=a_{F(X)}c_{A,F(X)}=(\id_X\otimes\mu)(c_{A,X}\otimes\id_A)$ 
(using $\mu c_{A,A}=\mu$). For $X\in Z_A$ one has $c_{X,A}^{-1}=c_{A,X}$, and then by 
naturality of $c$ with respect to $\mu$ and the hexagon, 
$c_{X,A}\,\lambda_{F(X)}\,(\id_A\otimes c_{X,A}^{-1})=\mu\otimes\id_X$. 
Thus, $c_{X,A}:F(X)\to(A\otimes X,\mu\otimes\id_X)$ transports $\lambda_{F(X)}$ to the 
free left module structure, and
$M\otimes_AF(X)\cong M\otimes_A(A\otimes X)\cong M\otimes X$ via the split coequalizer 
$M\otimes A\otimes A\otimes X\rightrightarrows M\otimes A\otimes X
\xrightarrow{a_M\otimes\id_X}M\otimes X$ (maps $\id_M\otimes\mu\otimes\id_X$ and 
$a_M\otimes\id_A\otimes\id_X$; sections $\id_M\otimes\eta\otimes\id_X$ and 
$\id_{M\otimes A}\otimes\eta\otimes\id_X$). The composite is $\sigma$, with 
$\sigma\pi=(a_M\otimes\id_X)(\id_M\otimes c_{X,A})$. The statement about $F(X)\otimes_AF(Y)$ 
is $\rho_{X,F(Y)}$. The coherence of the resulting monoidal structure on $F|_{Z_A}$ is the 
standard fact that induction is monoidal \cite{KO02, CMSY25} with left and 
right interchanged.

(iii) For inverses, $\rho^{-1}=\pi_{X,M}(\id_X\otimes\eta\otimes\id_M)$ and 
$\sigma^{-1}=\pi_{M,X}(\id_M\otimes\id_X\otimes\eta)$. Indeed 
$\rho\pi(\id\otimes\eta\otimes\id)=\id_X\otimes\lambda_M(\eta\otimes\id_M)=\id$ and 
$\sigma\pi(\id_{M\otimes X}\otimes\eta)=(a_M\otimes\id_X)(\id_M
\otimes c_{X,A}(\id_X\otimes\eta))=(a_M\otimes\id_X)(\id_M\otimes\eta\otimes\id_X)=\id$ 
(naturality of $c_{X,-}$ with respect to $\eta$). Now compute, using $c^A\pi=\pi c$ and 
$c_{A,M}(\eta\otimes\id_M)=\id_M\otimes\eta$:
\[
c_{X\otimes A,M}(\id_X\otimes\eta\otimes\id_M)=(c_{X,M}\otimes\id_A)(\id_X
\otimes c_{A,M})(\id_X\otimes\eta\otimes\id_M)=(\id_{M\otimes X}\otimes\eta)\,c_{X,M},
\]
 thus $\sigma c^A_{F(X),M}\rho^{-1}=\sigma\pi_{M,X}(\id_{M\otimes X}\otimes\eta)c_{X,M}
=c_{X,M}$. Similarly
\[
c_{M,X\otimes A}(\id_{M\otimes X}\otimes\eta)=(\id_X\otimes c_{M,A})(c_{M,X}\otimes\eta)
=(\id_X\otimes\eta\otimes\id_M)\,c_{M,X},
\]
 thus $\rho c^A_{M,F(X)}\sigma^{-1}=\rho\pi_{X,M}(\id\otimes\eta\otimes\id)c_{M,X}=c_{M,X}$. 
The monodromy statement follows by composing the two identities. Note that locality of $M$ 
and $F(X)$ enters only through the existence of $c^A$.

(iv) By (iii) with $M=F(Y)$: $\sigma_{F(Y),X}c^A_{F(X),F(Y)}\rho_{X,F(Y)}^{-1}=c_{X,F(Y)}$. 
Also $\rho_{Y,F(X)}\sigma_{F(Y),X}^{-1}
=(\id_Y\otimes\lambda_{F(X)})(\id_{Y\otimes A\otimes X}\otimes\eta)=\id_Y\otimes c_{A,X}$. 
Hence $\rho_{Y,F(X)}c^A\rho_{X,F(Y)}^{-1}=(\id_Y\otimes c_{A,X})(\id_Y
\otimes c_{X,A})(c_{X,Y}\otimes\id_A)=c_{X,Y}\otimes\id_A$, since $c_{A,X}c_{X,A}=\id$ 
on $X\otimes A$.

(v) The twist of $\D_A^{\loc}$ is $\theta_M$ (the twist of $\D$ on the underlying object) 
when $\theta_A=\id$ \cite{SY26} (cf. \cite{CMSY25}). Balancing: 
$\theta_{X\otimes A}=(\theta_X\otimes\theta_A)\mon_{X,A}=\theta_X\otimes\id_A$ for $X\in Z_A$.
\end{proof}

\subsection{Proof of Theorem \ref{thmBs}}

\begin{proof}
(0) $\bA$ is a haploid commutative exact algebra in the braided FTC $\B=\C\boxtimes\sVect$ 
according to Proposition  \ref{propbijection}. Thus, \cite{SY26} apply. 
In (b),(c) the hypotheses are translated 
by Proposition  \ref{propbijection} (c) \cite{CMSY25}. 

(1) Let $A':=\iota(\bA)\in\widetilde\B=\C\boxtimes\T^\varepsilon$, which is non-degenerate 
(Deligne product of non-degenerate braided FTCs \cite{Shi19}).  
 $\iota$ is a fully faithful braided tensor functor between braided FTCs, 
Lemma \ref{lemtoric}. By \cite{SY26}, $A'$ is haploid, commutative and exact. Thus,  
$\N:=\widetilde\B^{\loc}_{A'}$ is a non-degenerate braided FTC.  

\emph{The fermion.} $A'\in\iota(\B)=\mathcal Z_{(2)}(\unit\boxtimes\psi\subset\widetilde\B)$ 
 due to Lemma \ref{lemtoric}. Thus, $\psi:=\unit\boxtimes\psi$ centralizes $A'$, i.e., 
$\psi\in Z_{A'}$. Set $\tilde f:=F(\psi)=\psi\otimes A'$. By Lemma \ref{leminduction} (i) 
$\tilde f$ is local. By (ii) and $\psi\otimes\psi\cong\unit$, 
$\tilde f\otimes_{A'}\tilde f\cong F(\psi\otimes\psi)\cong A'=\unit_{\N}$, thus $\tilde f$ 
is invertible, hence simple. By (iv) with $X=Y=\psi$, the braiding $c^A_{\tilde f,\tilde f}$ 
corresponds to $c_{\psi,\psi}\otimes\id_{A'}=-\id$ on $\tilde f\otimes_{A'}\tilde f\cong A'$. 
Since the braiding of the unit object with itself is the identity, 
$\tilde f\not\cong\unit_{\N}$, thus $\langle\unit_{\N},\tilde f\rangle\simeq\sVect$ as 
braided categories. By (v) (for $\C$ ribbon) $\theta_{\tilde f}=\theta_\psi=\varepsilon$.
 $\tilde f$ is the parity shift of the regular module. As a right $A'$-module 
$\tilde f=\psi\otimes A'$ with action $\id_\psi\otimes\mu$. Under $\iota$ this is 
$\iota(f\otimes\bA)$ where $f\otimes\bA=(A_1,A_0)=:\Pi\bA$ with action 
$\id_f\otimes\mu_{\bA}$, i.e., the module with graded object $(A_1,A_0)$ and the action of 
$A_i$ on $A_j$ given by $\mu$. Since $\iota$ is a tensor functor, $\iota(\Pi\bA)\cong\tilde f$ 
as right $A'$-modules.

\emph{Identification of $\C_A^{\sloc}$.} $\iota$ is fully faithful, thus right $\bA$-modules 
in $\B$ are  the right $A'$-modules in $\widetilde\B$ whose underlying object belongs to 
$\iota(\B)$ (the action map is a morphism between objects of $\iota(\B)$, hence in 
$\iota(\B)$), with the same locality equation. 
Thus, $\B^{\loc}_{\bA}$ is the full subcategory of $\N$ of local modules with underlying 
object in $\iota(\B)$. Let $M\in\N$. By Lemma \ref{leminduction} (iii) (with $\D=\widetilde\B$, 
$X=\psi$) the monodromy of $M$ and $\tilde f$ in $\N$ is conjugate, via the isomorphism 
$\sigma_{M,\psi}$, to the monodromy $\mon_{M,\psi}$ of the underlying object of $M$ with 
$\psi$ in $\widetilde\B$. Therefore, $M\in\mathcal Z_{(2)}(\langle\tilde f\rangle\subset\N)$, 
 which is equivalent to $\mon^{\N}_{M,\tilde f}=\id$,  
  equivalently to $\mon_{M,\psi}=\id_{M\otimes\psi}$,  
  equivalently to 
(according to Lemma \ref{lemtoric})
  the underlying object of $M$ belongs to in $\iota(\B)$,  which is equivalent to  
$M\in\B^{\loc}_{\bA}$. 
  Note that each step is an equivalence: $\sigma$ is an isomorphism, and since 
$\langle\tilde f\rangle$ is generated by $\tilde f$ the centralizer condition is the single 
condition on $\tilde f$. 
Thus, $\C_A^{\sloc}=\D:=\mathcal Z_{(2)}(\langle\tilde f\rangle\subset\N)$.

\emph{M\"uger center.} By \cite{LW23}, for a non-degenerate 
braided FTC $\N$ and a full braided tensor subcategory $\mathcal S$ 
(closed under subquotients), $\mathcal Z_{(2)}(\mathcal S\subset\N)'=\mathcal S'$. 
Here $\mathcal S=\langle\tilde f\rangle\simeq\sVect$ is symmetric, thus 
$\mathcal S'=\mathcal S$ and $\D'=\langle\tilde f\rangle$. Moreover, for non-degenerate 
$\N$, $\FPdim\mathcal S\cdot\FPdim\mathcal Z_{(2)}(\mathcal S\subset\N)=\FPdim\N$ 
\cite{Shi19}, thus $\FPdim\D=\FPdim\N/2$. 

(2) Let $\C$ be ribbon and $\bA$ symmetric Frobenius. $\iota$ is a ribbon functor  
(due to demma \ref{lemtoric}) between braided FTCs. Thus, $A'$ is Frobenius 
(a strong monoidal functor preserves Frobenius algebras) and 
$\theta_{A'}=\iota(\theta^{\mathrm s}_{\bA})=\id$. Hence $A'$ 
is symmetric Frobenius. Note that it is not possibled to use 
 Lemma 5.25 (c) since $\B$ is not an MTC. 

As $\widetilde\B$ is an MTC, $\N$ is an MTC. For the ribbon structures, 
 $\N$ carries the twist induced from $\widetilde\B$ 
according to Lemma \ref{leminduction} (v).  On $\D$, the full ribbon subcategory (centralizers are closed under duals 
and tensor products) this twist is $\theta_M$ of the underlying object in $\iota(\B)$, 
which is $\iota(\theta^{\mathrm s}_M)$ by Lemma \ref{lemtoric}, i.e.,  the twist 
$\theta^{\mathrm s}$ of $\B_{\bA}^{\loc}$ of Section \ref{secgraded}. For both values of 
$\varepsilon$: $\theta_{\tilde f}=\theta^{\T^\varepsilon}_\psi=\varepsilon$ ($-1$ for 
$\T^-$, $+1$ for $\T^+$). 
Thus, $\D$ is $\varepsilon$-super-modular.
\end{proof}

\begin{remark} 
\label{remenriched}

Let us discuss the enriched supercategory.   
Theorem \ref{thmBs} (1) concerns the underlying category $\D=\C_A^{\sloc}$ (even morphisms). 
Let $\Pi:=\tilde f\otimes_A-$ be the parity shift and define the supercategory 
$\D^{\mathrm s}$ with $\Hom^{\mathrm s}(M,N)=\Hom_\D(M,N)\oplus\Hom_\D(M,\Pi N)$ 
(even $\oplus$ odd), composition through $\Pi^2\cong\id$. 
We work here with $\D$ as underlying $\mathrm{Rep}^0A$ 
 in terms of  \cite{CKM24} formulated for $\mathcal{SC}$. 
  In $\D^{\mathrm s}$ the odd isomorphism $\id\in\Hom_\D(M,\Pi\Pi M)$ 
identifies $M$ and $\Pi M$, in particular, $\tilde f\cong\unit$. An object of $\D$ is 
transparent iff its double braidings with all objects are trivial. 
  $\D'=\langle\unit,\tilde f\rangle$ tells us that the transparent objects of $\D$ are  
the sums of copies of $\unit$ and $\Pi\unit$. Thus, Theorem \ref{thmBs} (1) states that 
in $\D^{\mathrm s}$ 
 every transparent object is a sum of copies of the unit object, up to odd isomorphism.  
 $\D^{\mathrm s}$ is non-degenerate in the super sense, and $\D$ is its ``bosonization'' with 
the fermion $\tilde f$. Note that we do not claim that this $\D^{\mathrm s}$ coincides with 
the supercategory of \cite{CKM24} (see Section \ref{seccomparison}).
\end{remark}

\begin{remark} 
\label{remcompletion} 

The choice of the completion is the following. 
$\N$ depends on the choice of the modular completion $\T^\varepsilon$ of 
$\sVect^\varepsilon$ while $\D$ does not. 
  There are sixteen minimal modular extensions of $\sVect$ (Kitaev 16-fold way, 
\cite{BGHNPRW17}) and we use only the toric code. By Proposition  \ref{propWitt}, 
$[\N]=[\C]$ in the 
Witt group of non-degenerate braided FTCs for this choice. Corollary \ref{corext}
is formulated in this case. Since we would require a version of Lemma 
\ref{lemtoric} for non-pointed completions, we make no claim for the other fifteen cases.  
  For nice VOSAs with $U=V_{\bar0}$, \cite{16fold} 
shows that $\Rep(V_{\bar 0})$ is a modular category containing the NS category as the 
centralizer of a fermion. This is consistent with Example \ref{exIsing}, where $\N$ is
 again an Ising-type category.
\end{remark}

\section{Constructions}
\label{secconstructions}

\subsection{Graded simple currents}

Let $H\le\Inv(\C)$ be a subgroup of a braided FTC and $\chi:H\to\mathbb Z/2$ a character 
(the grading is part of the data). Let $X_h$ be the invertible object of class $h$, 
$q(h):=q(X_h)$, and $A_H=\bigoplus_hX_h$ with $\deg X_h=\chi(h)$. Then we have 

\begin{proposition}
\label{propsimple-current}

\begin{enumerate}

\item[(a)] Assume \textup{(H1)}: $q(h)=(-1)^{\chi(h)}$ for all $h\in H$. Then $A_H$ 
(with the multiplication of \cite{SY26}) is a haploid supercommutative superalgebra, 
$\bA_H$ is exact, and $\C_{A_H}^{\sloc}$ is a braided FTC.

\item[(b)] Assume in addition \textup{(H2)}: $\C$ is ribbon and 
$\theta_{X_h}=\varepsilon^{\chi(h)}\id$ for all $h\in H$. Then $\bA_H$ 
is symmetric Frobenius and $\C^{\sloc}_{A_H}$ is ribbon.

\item[(c)] Assume \textup{(H1)}, \textup{(H2)} and that $\C$ is an MTC. Then 
$\C_{A_H}^{\sloc}$ is $\varepsilon$-super-modular.
\end{enumerate}
\end{proposition}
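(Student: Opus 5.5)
The plan is to reduce everything to the ordinary simple-current theory of \cite{SY26} applied in $\B=\C^{\gr}$, and then invoke Theorem \ref{thmBs}. Consider the graded invertible objects $\bar X_h\in\B$, namely $X_h$ placed in degree $\chi(h)$; concretely $\bar X_h=(X_h,0)$ or $(0,X_h)$. These are invertible in $\B$ and they form a subgroup $\bar H\cong H$ of $\Inv(\B)$, because $\chi$ is a character and so degrees add. By definition $\bA_H=\bigoplus_h\bar X_h$.

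\textbf{Step 1 (self-braiding in $\B$).} By item 4 of Section \ref{secgraded}, $c^{\mathrm s}_{\bar X_h,\bar X_h}=(-1)^{\chi(h)^2}c_{X_h,X_h}=(-1)^{\chi(h)}q(h)\,\id$. Hypothesis (H1) is therefore exactly $q_{\B}(\bar X_h)=1$ for all $h$. This is the condition under which \cite{SY26} constructs a commutative algebra structure on $\bigoplus_h\bar X_h$. The 2-cocycle coming from the associator restricted to $\bar H$ is trivialisable because the quadratic form $q_\B$ is trivial on $\bar H$; this is the standard abelian-cohomology (Eilenberg--MacLane) argument. I will cite this rather than redo it, and this citation is the main point to check.

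This yields a commutative algebra $\bA_H$ in $\B$. By Proposition \ref{propbijection} (a) it corresponds to a supercommutative superalgebra $A_H$ in $\C$. Haploidity holds because $\Hom(\unit,X_h)=0$ for $h\neq 1$. For exactness, I will use the fact that a haploid algebra which is a direct sum of invertibles is a twisted group algebra. Hence it is (separable) Frobenius, and its module category is exact: every module is a direct summand of a free module $M\otimes\bA_H$, since $\bA_H$ is separable in characteristic $0$ with $|H|$ invertible. Theorem \ref{thmBs} (0) then gives (a).

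\textbf{Step 2 (twist).} Assume $\C$ is ribbon. By Proposition \ref{propbijection} (c), $\theta^{\mathrm s}_{\bar X_h}=\varepsilon^{\chi(h)}\theta_{X_h}$, and (H2) makes this $\id$. So $\theta^{\mathrm s}_{\bA_H}=\id$, and by Definition \ref{defprops} together with the Frobenius structure from Step 1, $\bA_H$ is symmetric Frobenius.

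For ribbonness I will apply Theorem \ref{thmAs} (c), with rigidity supplied by exactness as noted after Theorem \ref{thmBs}. What remains is the condition $\bA^*\in(\C_A^{\sloc})'$. Since $\bA_H$ is Frobenius, $\bA_H^*\cong\bA_H$ as modules, and this is the unit of the local module category, which is transparent. This gives (b).

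\textbf{Step 3 (super-modularity).} If $\C$ is an MTC, then it is non-degenerate and ribbon. Together with Steps 1 and 2, all hypotheses of Theorem \ref{thmBs} (2) hold: haploid, exact and symmetric Frobenius. Hence $\C_{A_H}^{\sloc}$ is $\varepsilon$-super-modular, which is (c).

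The expected obstacle is Step 1: checking that the multiplication of \cite{SY26} transfers to $\B$ with the signed braiding, so that triviality of $q_\B$ on $\bar H$ gives commutativity. For this I would first verify directly that $\mu\circ c^{\mathrm s}=\mu$ on $\bar X_g\otimes\bar X_h$. The approach is to reduce, via the symmetric bicharacter $\mon$, to the diagonal case $g=h$, where the condition is $q_\B=1$. Here I would use Lemma \ref{lemmonodromy}, which says the monodromy in $\B$ equals that in $\C$.
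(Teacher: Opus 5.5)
Your main line is correct and follows the paper's proof. You pass to $Y_h=X_h\boxtimes f^{\chi(h)}$ in $\B$, observe that (H1) is exactly $c^{\mathrm s}_{Y_h,Y_h}=\id$, apply the simple-current lemma of \cite{SY26} in $\B$, use (H2) to get $\theta^{\mathrm s}_{\bA_H}=\id$, and deduce (c) from Theorem~\ref{thmBs}(2). Your separability proof of exactness and your explicit check $\bA_H^*\cong\bA_H$ (so that Theorem~\ref{thmAs}(c) applies) are valid self-contained substitutes for what the paper takes from Lemma~6.1 of \cite{SY26}.

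\emph{Caution about your last paragraph.} Do not replace the citation by the direct check you sketch there, because that check does not work. For an associative choice of isomorphisms $m_{g,h}$, define scalars $\beta(g,h)$ by $m_{h,g}\,c^{\mathrm s}_{Y_g,Y_h}=\beta(g,h)\,m_{g,h}$. Then $q_{\B}\equiv1$ and $\mon\equiv1$ give only $\beta(g,g)=1$ and $\beta(g,h)\beta(h,g)=1$. So $\beta$ is an alternating bicharacter, and such a bicharacter can be non-trivial when $H$ is not cyclic.

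For example, take $\Vect_{\mathbb Z/2\times\mathbb Z/2}$ with trivial braiding and a non-trivial $2$-cocycle. The twisted group algebra (isomorphic to $M_2(\kk)$) is haploid and associative, with all $q=1$ and all monodromies trivial, yet it is not commutative. Commutativity requires choosing the trivialization compatibly with the braiding. For $q\equiv1$, the associator and braiding on $\{Y_h\}$ are the abelian coboundary of a single $2$-cochain, and $m$ must be taken from that cochain. This is exactly what the cited construction supplies, so the reduction to the diagonal case cannot replace it.

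Likewise, your claim that ``a haploid algebra that is a direct sum of invertibles is a twisted group algebra'' is false in general. A counterexample is $\unit\oplus X$ with zero product on $X\otimes X$. Your separability argument still goes through, but only because the components $m_{g,h}$ of the constructed multiplication are isomorphisms; state that explicitly.
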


\begin{proof}
We separate the four parts. 

\emph{(1) Trivial monodromy on $H$ (from (H1) alone).} The monodromy bicharacter is 
$\mon_{g,h}=q(gh)q(g)^{-1}q(h)^{-1}$ (polarization of the quadratic form of the pointed 
braided subcategory \cite{EGNO16}). By (H1) $q=(-1)^\chi$ is a character, thus $\mon_{g,h}=1$.

\emph{(2) Triviality of the associativity class (from (H1) alone).} The braided subcategory 
generated by the $X_h$ is the pointed category $\C(H,q|_H)$, determined up to braided 
equivalence by $q|_H$ (Eilenberg-Mac\,Lane/Joyal-Street \cite{JS93}). The quadratic form of 
$\mathrm{Vec}_H$ with trivial associator and braiding $c(g,h)=(-1)^{\chi(g)\chi(h)}$ is 
$h\mapsto(-1)^{\chi(h)}=q(h)$, thus the associator of $\C(H,q|_H)$ is trivial (as a class).

\emph{(3) Existence of the multiplication.} By (2) one can choose isomorphisms 
$m_{g,h}:X_g\otimes X_h\to X_{gh}$ forming a trivial 2-cocycle, which define an associative 
unital multiplication on $A_H$.

\emph{(4) Commutativity, exactness, Frobenius.} Set $Y_h=X_h\boxtimes f^{\chi(h)}\in\B$. 
$h\mapsto Y_h$ is a homomorphism $H\to\Inv(\B)$ ($\chi$ is a character) and 
$c^{\mathrm s}_{Y_h,Y_h}=(-1)^{\chi(h)^2}q(h)\id=\id$ by (H1). Hence, 
  Lemma 6.1(a) of \cite{SY26} 
applies to $\{Y_h\}\subset\Inv(\B)$: $\bA_H=\bigoplus Y_h$ is a haploid commutative exact 
algebra, i.e., $A_H$ is supercommutative (Proposition  \ref{propbijection}). Under (H2), 
$\theta^{\mathrm s}_{Y_h}=\varepsilon^{\chi(h)}\theta_{X_h}=\id$ and Lemma 6.1 (b) of 
\cite{SY26} 
gives symmetric Frobenius, hence (b). Part (c) is Theorem  \ref{thmBs} (2).

\emph{Necessity of (H1).} If $\bA_H$ is commutative and the multiplication maps $m_{h,h}$ 
(isomorphisms onto the summands $X_{h^2}$, the nonzero components of $\mu$ 
on $Y_h\otimes Y_h$) are nonzero, then $\mu c^{\mathrm s}=\mu$ 
on the simple object $Y_h\otimes Y_h$ leads to $c^{\mathrm s}_{Y_h,Y_h}=\id$, i.e., (H1).
\end{proof}

\begin{remark}
\label{remSY-bosonic}

For $\chi=0$ one recovers \cite{SY26}. The example ($u_q^\phi(\mathfrak{sl}_2)$, 
$q=e^{\pi\iota/p}$, $4\mid p$) has, by the data given there, $q(\psi)=+1$, $\theta_\psi=-1$, 
hence $\dim\psi=-1$ by Remark \ref{remtheta-vs-c}. It has the same data as the case of a 
$\tfrac12\mathbb Z$-graded vertex operator algebra in \cite{CKM24} (see Section 
\ref{secVOSA}), and it is not an instance of Proposition  \ref{propsimple-current}.
\end{remark}

\subsection{Adjoints of central functors}

\begin{remark}
\label{remadjoint}
Let $\C$ be a braided FTC and $F:\B=\C^{\gr}\to\D$ a central tensor functor to a finite 
tensor category. By \cite{SY26}, $F^\ra(\unit)$ is a haploid commutative exact algebra in 
$\B$, $\B_{\bA}^\sigma\simeq\mathrm{Im}(F)$ and $\FPdim\bA=\FPdim\B/\FPdim\mathrm{Im}(F)$. 
Commutativity and exactness come from centrality and the adjunction. The $\mathbb Z/2$-grading 
is just the decomposition of the object $F^\ra(\unit)\in\B=\C^{\gr}$ into its two components, 
which by Proposition  \ref{propbijection} (a) makes it a superalgebra in $\C$. 
\end{remark}

\section{Examples}
\label{secexamples}

\begin{example} 
\label{exIsing}

We discuss here the free fermion case.  
Let $\C$ be the Ising MTC (the category of $L(\tfrac12,0)=V_{\bar0}$, simples 
$\unit,\psi,\sigma$), $\varepsilon=-1$, $H=\{\unit,\psi\}$, $\chi=\id$. Then 
$q(\psi)=c_{\psi,\psi}=-1=(-1)^{\chi(\psi)}$ and $\theta_\psi=-1=\varepsilon$, thus Proposition  
\ref{propsimple-current} (a)-(c) apply: $A=\unit\oplus\psi$ (the free Majorana fermion VOSA) 
is a haploid supercommutative symmetric Frobenius superalgebra. The Frobenius-Perron 
dimensions are $\FPdim\C=4$, $\FPdim\B=8$, $\FPdim\widetilde\B=16$, $\FPdim\bA=2$, 
$\FPdim\N=\FPdim\widetilde\B/\FPdim(\bA)^2=16/4=4$, $\FPdim\C_A^{\sloc}=\FPdim\N/2=2$. 
Explicitly, the local objects of $\mathrm{Ising}\boxtimes\T$ for 
$A'=(\unit,\unit)\oplus(\psi,\psi_T)$ are $(\unit,\unit),(\psi,\unit),(\unit,\psi_T),
(\psi,\psi_T)$ and $(\sigma,e),(\sigma,m)$. After identification under $A'$ the simples of 
$\N$ are $\unit_\N$, $\tilde f=(\psi,\unit)\sim(\unit,\psi_T)$ (both of FP dimension $1$) 
and the class of $(\sigma,e)\sim(\sigma,m)$ of FP dimension $\sqrt2$ as we check, 
 $1^2+1^2+(\sqrt2)^2=4=\FPdim\N$. This $\N$ is an Ising-type category, and the centralizer of 
$\tilde f$ is $\{\unit,\tilde f\}$ since $\mon_{s,\tilde f}=-1$ for the $\sqrt2$-dimensional 
simple $s$. Thus, $\C_A^{\sloc}=\langle\tilde f\rangle\simeq\sVect$: the NS sector of the 
free fermion has one simple module up to parity. The Ramond sector arises from $\sigma$: 
$\mon_{\sigma,\psi}=-1$, thus $\sigma\otimes A$ is a graded $A$-module that is not 
super-local. Note that $\psi$ is not transparent in $\C$.
\end{example}

\begin{example}
\label{exSF}

Here we discuss the symplectic fermions, $d=1$. 
\emph{External inputs} (not proved here):

\textup{(E1)} $\C=\Rep(W(2))$ ($W(2)$ the triplet algebra at $c=-2$, equal to the even part 
$\mathrm{SF}_1^+$ of the symplectic fermion VOSA $V=\mathrm{SF}_1$ \cite{Abe07}) is a 
non-degenerate braided FTC with ribbon twist $e^{2\pi iL_0}$;

\textup{(E2)} the odd part $V_{\bar1}=W(1)$ (the irreducible module of lowest weight $1$) 
is invertible, $W(1)\boxtimes W(1)\cong W(0)=\unit$ \cite{TW13};

\textup{(E3)} $V$ is a supercommutative superalgebra in $\C$ with $A_0=\unit$, $A_1=W(1)$, 
whose local modules are the NS $V$-supermodules, i.e., (V1),(V2) of Section \ref{secVOSA}, 
with non-zero product $V_{\bar1}\otimes V_{\bar1}\to V_{\bar0}$.

\emph{Categorical consequences of (E1)-(E3).} $\theta_{W(1)}=e^{2\pi i\cdot1}=+1$, thus 
$\varepsilon=+1$ ($\mathbb Z$-graded VOSA; ``incorrect statistics'' in the sense of 
\cite{CKM24}). Since $\Hom(W(1)\otimes W(1),\unit)$ is one-dimensional, 
$c_{W(1),W(1)}=q\,\id$, and $\mu c=-\mu$ with $\mu\neq0$ leads to $q(W(1))=-1$. 
Then $\dim W(1)=\theta/q=-1$, matching the value of \cite{CKM24}. Thus, (H1) and (H2) 
hold for $H=\{\unit,W(1)\}$, $\chi=\id$, $\varepsilon=+1$, and Proposition  
\ref{propsimple-current} (c) shows that $\C_A^{\sloc}$ is a $(+1)$-super-modular category, 
with $\FPdim\bA=2$, $\FPdim\N=4\FPdim\C/2^2=\FPdim\C$, $\FPdim\C_A^{\sloc}=\FPdim\C/2$. 
By (E3) it is $\Rep^{NS}(\mathrm{SF}_1)$. Exactness and the Frobenius property of $\bA$ 
comes not from (E3) but from Proposition  \ref{propsimple-current}. 
\end{example}

\section{The VOSA dictionary}
\label{secVOSA}

Let $V=V_{\bar0}\oplus V_{\bar1}$ be a VOSA and $U\subseteq V_{\bar0}$ a vertex subalgebra 
with the same conformal vector. $\C$ is a category of $U$-modules containing 
$V_{\bar0},V_{\bar1}$, with the braided tensor structure of \cite{HLZ}. The following 
assumptions are the theory of \cite{HKL15,CKL20,CKM24}.

\begin{assumption}
\label{assVOSA}

\textup{(V1)} \cite{CKL20, CKM24}: $V$ is a supercommutative superalgebra in $\C$ 
(Def. \ref{defsuperalgebra}) with $A_i=V_{\bar i}$. This is an algebra in the supercategory 
$\mathcal{SC}$ of \cite{CKM24} with even structure maps. The braiding of $\mathcal{SC}$ is 
$\bigoplus(-1)^{i_1i_2}\mathcal R_{W_1^{i_1},W_2^{i_2}}$, i.e., our $c^{\mathrm s}$.

\textup{(V2)} \cite{CKM24}: the category of NS (untwisted) $V$-supermodules in $\C$ with 
even morphisms is isomorphic as a braided monoidal category to $\C_A^{\sloc}=\B_{\bA}^{\loc}$.

\textup{(V3)} $\C$ is a braided finite tensor category with ribbon twist 
$\theta=e^{2\pi iL_0}$. This holds for strongly finite $U$ with rigid $\Rep(U)$ 
(\cite{Hua09} gives the braided monoidal structure for $C_2$-cofinite $U$, for 
rigidity see \cite{CMSY25}; see \cite{Hua08} for strongly rational $U$) it is an 
assumption otherwise.

\textup{(V4)} $L_0$ acts semisimply on $V_{\bar1}$ (automatic for the VOSA $V$).
\end{assumption}

\subsection{The three cases}

\cite{CKM24} distinguishes: (1) $\mathbb Z$-graded VOSAs; (2) $\tfrac12\mathbb Z$-graded 
VOAs; (3) $\tfrac12\mathbb Z$-graded VOSAs whose odd part has half-integral weights 
(``correct statistics''). Assume $V=V_+\oplus V_-$ with $V_-$ a self-dual simple current, 
$V_-\boxtimes V_-\cong V_+$, and the product $V_-\otimes V_-\to V_+$ non-zero 
(so that $\Hom(V_-\otimes V_-,V_+)$ is one-dimensional). In \cite{CKM24} this is 
derived when $V_+$ is regular. In the non-semisimple case it is an input 
(in Example \ref{exSF} it is (E2)). Then:
\begin{center}
\begin{tabular}{lcccccc}
case & $\chi$ & $q(V_-)$ & $\theta_{V_-}$ & $\dim V_-$ & $\varepsilon$ & \cite{CKM24}: 
$\mathrm{qdim}\,V_-$\\\hline
(1) $\mathbb Z$-graded VOSA & $1$ & $-1$ & $+1$ & $-1$ & $+1$ & $-1$ (incorrect)\\
(2) $\tfrac12\mathbb Z$-graded VOA & $0$ & $+1$ & $-1$ & $-1$ & - & $-1$ (incorrect)\\
(3) $\tfrac12\mathbb Z$-graded VOSA & $1$ & $-1$ & $-1$ & $+1$ & $-1$ & $+1$ (correct)
\end{tabular}
\end{center}
$\theta=e^{2\pi iL_0}$ is read off from the weights, $q$ is forced by (V1) 
($\mu c=(-1)^{|u||v|}\mu$ and $\mu\neq0$), $\dim=\theta/q$ by Remark \ref{remtheta-vs-c}. 
The agreement with the last column in all three cases is a non-trivial check of the 
conventions $c^{\mathrm s}$, $\theta^{\mathrm s}$. Cases (1),(3) are Proposition  
\ref{propsimple-current}. Case (2) is bosonic with $\theta_A\neq\id$ and $\C_A^{\loc}$ 
has no induced ribbon structure from $\theta$ (consistent with \cite{CKM24}: 
$e^{2\pi iL_0}$ is not a module map there). The example of Remark \ref{remSY-bosonic} 
has the data of case (2).

\subsection{Sign of the commutativity}

The identity $\mu c=(-1)^{|u||v|}\mu$ in (V1) is formally VOSA skew-symmetry 
$Y(u,x)v=(-1)^{|u||v|}e^{xL_{-1}}Y(v,-x)u$ together with the fact that on $V\otimes V$ 
only integral powers of $x$ occur, so that the HLZ braiding ($x\mapsto e^{\pm\pi i}x$) 
acts by $x\mapsto-x$. The sign does not depend on the weights. 
Here we work with statements of \cite{CKL20, CKM24}, and we do not use this computation. 

\subsection{The twist}

On $M=(M_0,M_1)\in\C_A^{\sloc}$, 
$\theta^{\mathrm s}_M=\theta_{M_0}\oplus\varepsilon\theta_{M_1}$. For $\varepsilon=+1$ 
this is $e^{2\pi iL_0}$. For $\varepsilon=-1$ it is $e^{2\pi iL_0}P_M$ with $P_M$ the parity 
involution. In case (3) it is a $V$-module endomorphism: conjugation by $e^{2\pi iL_0}$ 
multiplies $Y(v,x)$, $v$ odd, by $e^{2\pi i\,\mathrm{wt}\,v}=-1$, as does conjugation by $P$,
thus the product commutes with $Y(v,x)$. On $V$, $e^{2\pi iL_0}P_V=\id$ (\cite{CKM24}: 
$\theta_V=P_V$ for correct statistics), as required for a ribbon structure on a category
 with unit $V$. Thus, the answer to the quesion ``does $\theta^{\mathrm s}$ agree with the twist 
$e^{2\pi iL_0}$ on NS supermodules?'' is:  for $\varepsilon=+1$, and up to the parity 
operator for $\varepsilon=-1$, where the correction is forced.

\subsection{Applying the theorems}

\begin{corollary}
\label{corVOSA}
Assume \textup{(V1)-(V4)}, $\bA$ exact and haploid, and 
$\theta_{V_{\bar1}}=\varepsilon\,\id$ for a sign $\varepsilon$ (all weights of 
$V_{\bar1}$ integral, resp.\ half-integral). Then $\Rep^{NS} (V)$ is a rigid braided 
finite tensor category. If moreover $\C$ is non-degenerate, it is super-non-degenerate 
and is the centralizer of a fermion in a non-degenerate braided FTC $\N$. If in addition 
$\C$ is ribbon and $\bA$ is Frobenius (hence symmetric), it is ribbon with the twist 
$\theta^{\mathrm s}$ and $\varepsilon$-super-modular.
\end{corollary}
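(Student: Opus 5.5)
The corollary is a translation exercise: I would reduce it to Theorem \ref{thmBs}, using assumptions (V1)--(V4) to produce the data that theorem requires. The plan has three steps.

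\emph{Step 1: identify the categories.} By (V1), $A:=V$ with $A_i=V_{\bar i}$ is a supercommutative superalgebra in $\C$. Proposition \ref{propbijection} (a) then makes $\bA=(V_{\bar0},V_{\bar1})$ a commutative algebra in $\B=\C^{\gr}$. By (V2), $\Rep^{NS}(V)$ with even morphisms is braided-equivalent to $\C_A^{\sloc}=\B_{\bA}^{\loc}$. Every claim about $\Rep^{NS}(V)$ may therefore be proved for $\C_A^{\sloc}$. By (V3), $\C$ is a braided FTC over $\mathbb C$, which is algebraically closed of characteristic $0$. So the standing hypotheses of Theorem \ref{thmBs} hold with $\bA$ haploid and exact.

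\emph{Step 2: apply Theorem \ref{thmBs}.} Part (0) shows that $\C_A^{\sloc}$ is a braided finite tensor category, and rigidity is part of that notion. If $\C$ is non-degenerate, part (1) shows that $\C_A^{\sloc}$ is super-non-degenerate with fermion $\Pi\bA$ and equals $\mathcal Z_{(2)}(\langle\tilde f\rangle\subset\N)$ for a non-degenerate braided FTC $\N$. This is exactly the second claim.

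\emph{Step 3: the ribbon case.} Suppose $\C$ is ribbon with twist $\theta=e^{2\pi iL_0}$, and $\bA$ is Frobenius. To invoke part (2) we need $\bA$ symmetric, i.e.\ $\theta_{V_{\bar0}}=\id$ and $\theta_{V_{\bar1}}=\varepsilon\id$, by Definition \ref{defprops} and Proposition \ref{propbijection} (c).
\begin{itemize}
\item The second condition is the hypothesis on $V_{\bar1}$. By (V4), $L_0$ acts semisimply, so integral (resp.\ half-integral) weights give $e^{2\pi iL_0}=+1$ (resp.\ $-1$).
\item The first condition holds because $V_{\bar0}$ has integral conformal weights and $L_0$ acts semisimply on it, as for any VOSA.
\end{itemize}
Part (2) then makes $\C_A^{\sloc}$ $\varepsilon$-super-modular, with ribbon twist $\theta^{\mathrm s}$ as described in the subsection on the twist. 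This also justifies the parenthetical ``hence symmetric''.

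The main obstacle I expect is the first symmetry condition: checking that $\theta_{V_{\bar0}}=\id$ really follows from the assumptions, in particular that $L_0$ acts semisimply with integral weights on $V_{\bar0}$ when $V_{\bar0}$ is viewed as an object of $\C$. If this is not automatic from the VOSA grading conventions, I would add it to the hypothesis on the weights of $V$.
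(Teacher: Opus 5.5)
Your proposal is correct and follows the paper's route. The paper gives no separate proof: the corollary is Theorem \ref{thmBs} (0)--(2) applied to $\bA=(V_{\bar0},V_{\bar1})$ after translating through (V1)--(V4) and Proposition \ref{propbijection}, with the twist identified as $\theta^{\mathrm s}$ in the subsection on the twist.

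The point you flag is real, but it is a gap in the statement, not in your argument. ``Frobenius (hence symmetric)'' needs $\theta_{V_{\bar0}}=\id$, i.e.\ integral weights on $V_{\bar0}$. The paper assumes this tacitly (cases (1) and (3) of its table), while the stated hypothesis only constrains $V_{\bar1}$, so making it explicit, as you propose, is the right fix.
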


\begin{lemma}
\label{lemsimple-exact}
Assume \textup{(V1)-(V3)}, $\Hom_\C(\unit,V_{\bar0})=\kk$, and that $V$ is simple as a 
$V$-supermodule (no proper nonzero graded submodule). Then $\bA$ is a simple algebra in $\B$, 
hence exact.
\end{lemma}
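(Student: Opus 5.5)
The plan is to check the two claims separately: first that $\bA$ is simple as an algebra in $\B$, then that simplicity gives exactness.

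\emph{Simplicity.} By Proposition \ref{propbijection} (b), right $\bA$-modules in $\B$ are the $\mathbb Z/2$-graded right $A$-modules in $\C$, with degree-preserving morphisms. The same holds for $\bA$-bimodules. The ideals of the algebra $\bA$ in $\B$ (subobjects of $\bA$ in ${}_{\bA}\B_{\bA}$) are therefore exactly the graded two-sided ideals $I=(I_0,I_1)$ of $A$ in $\C$. Because $\bA$ is commutative, left and right ideals coincide, so it suffices to consider right ideals. A right ideal is precisely a graded $\bA$-submodule of $\bA$. By (V2) and the usual dictionary of \cite{CKL20,CKM24}, a graded right $A$-submodule of $A=V$ in $\C$ is a graded $V$-submodule of $V$. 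Two points must be checked here. First, the regular module $\bA$ is local: for commutative $\bA$ the monodromy condition $a\circ\mon=a$ follows from $\mu c^{\mathrm s}=\mu$ applied twice. Second, subobjects of local modules are again local. The hypothesis that $V$ is simple as a $V$-supermodule then forces $I=0$ or $I=\bA$, so $\bA$ is simple in $\B$. Haploidy, $\Hom_\C(\unit,V_{\bar0})=\kk$, is not needed for this step.

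\emph{Simple implies exact.} Next I would invoke the result of \cite{SY26}: in a finite tensor category, a haploid commutative algebra that is simple as a bimodule over itself (equivalently, simple as an algebra) is exact. The argument there runs as follows. $\B_{\bA}$ is a module category over $\B$, and every nonzero module $M$ admits the nonzero map $M\otimes\bA\to M$. Simplicity together with commutativity makes the induction functor faithful, and this gives indecomposability and exactness. If \cite{SY26} only states that connected étale algebras are exact, I would instead cite the statement that a commutative algebra $A$ in a braided finite tensor category is exact if and only if it is a direct sum of simple algebras. The hypotheses needed for this are that $\B$ is a finite tensor category, which holds by (V3) since $\B=\C\boxtimes\sVect$, and that $\bA$ is haploid in $\B$. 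The latter holds because $\Hom_\B(\unit,\bA)=\Hom_\C(\unit,A_0)=\kk$, as $\unit_\B=(\unit,0)$.

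\emph{Main obstacle.} The delicate step is the identification in the first paragraph. An ideal of $\bA$ is a subobject in $\B$, so it must be shown to correspond to a sub-$V$-supermodule in the vertex-algebraic sense; a naive reading of ``simple as a $V$-supermodule'' might not cover it. In particular, a subobject $I\subset V$ in $\C$ closed under $\mu$ must be a $V$-submodule. I would verify this with the standard correspondence (V1)/(V2): the vertex operator $Y$ restricted to $V\otimes I$ is obtained from $\mu$ by the intertwining operator associated with $\mu|_{V\boxtimes I}$. The image of $\mu|_{V\boxtimes I}$ therefore lies in $I$ exactly when $Y(V,x)I\subset I[[x^{\pm1}]]$.
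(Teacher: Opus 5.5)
Your proposal is correct and follows the paper's proof essentially step for step: an ideal is in particular a right submodule of $\bA$, it inherits locality from the regular module, (V2) turns it into a graded $V$-submodule of $V$, simplicity of $V$ leaves only $0$ and $\bA$, and exactness is then imported from the literature.

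The one thing to drop is your sketched justification of ``simple $\Rightarrow$ exact'' via faithfulness of induction. That argument proves nothing: induction $X\mapsto X\otimes\bA$ is faithful for any algebra with monic unit, and it says nothing about projectivity of $P\otimes M$. The step really rests on the theorem that simple algebras in finite tensor categories are exact. That theorem is your fallback statement, and it is what the paper cites as \cite{CSZ25}, with haploidness used only for indecomposability.
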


\begin{proof}
We use the definition of \cite{CMSY25}: $\bA$ is simple if its only two-sided ideals 
(sub-bimodules of $\bA$) are $0$ and $\bA$. A two-sided ideal is in particular a right 
submodule $I\subset\bA$ with inclusion $j$, $ja_I=\mu(j\otimes\id)$. Note that locality is 
inherited: with $\mon_I=c_{A,I}c_{I,A}$, naturality of $c$ gives 
$j\,a_I\,\mon_I=\mu(j\otimes\id)\mon_I=\mu\,\mon_{\bA}\,(j\otimes\id)=\mu(j\otimes\id)
=j\,a_I$ ($\bA$ is local over itself, by commutativity: $\mu c^2=\mu$), and $j$ is a 
monomorphism, thus $a_I\mon_I=a_I$. By (V2) $I$ is a graded $V$-submodule of $V$, 
thus $I\in\{0,V\}$. Hence the only right submodules of $\bA$ are $0,\bA$, and  
the only two-sided ideals. Simple algebras in finite tensor categories are exact 
\cite{CSZ25} (indecomposability holds as $\bA$ is haploid).
\end{proof}

The next corollary concerns the fusion case. 

\begin{corollary}
\label{corfusion}

Assume \textup{(V1)-(V3)}, $\C$ a braided fusion category (e.g., $U$ strongly rational), 
$V$ simple, $\Hom_\C(\unit,V_{\bar0})=\kk$. Then $\C_{A}^{\sloc}=\B_{\bA}^{\loc}$ is a 
braided fusion category.
\end{corollary}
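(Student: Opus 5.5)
The plan is to reduce Corollary \ref{corfusion} to the fusion case of the results on local modules over exact commutative algebras, applied to $(\B,\bA)$. The first step is to see that $\B=\C^{\gr}\simeq\C\boxtimes\sVect$ is itself a braided fusion category. It is semisimple, since objects are pairs and morphisms are pairs. It is rigid by Lemma \ref{lemdualityB}. It has finitely many simples $(X,0),(0,X)$, and its unit $(\unit,0)$ is simple. By (V1) and Proposition \ref{propbijection}(a), $\bA$ is a commutative algebra in $\B$. It is haploid, because $\Hom_\B(\unit,\bA)=\Hom_\C(\unit,V_{\bar0})=\kk$.

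The second step is exactness. Since $V$ is simple as a supermodule, Lemma \ref{lemsimple-exact} gives that $\bA$ is simple, hence exact. In the fusion setting this is more direct: over a semisimple $\B$, a haploid commutative algebra that is simple is separable. In characteristic $0$ one can even argue via the nonzero dimension of a haploid algebra. Either way, the module categories $\B_{\bA}$ and ${}_{\bA}\B_{\bA}$ are semisimple.

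The third step applies Theorem \ref{thmBs}(0), or equivalently \cite{SY26} to $(\B,\bA)$. This shows that $\B_{\bA}^{\loc}$ is a braided finite tensor category. In particular it is rigid, its unit $\bA$ is simple by haploidity, and it has finitely many simples. The remaining point is semisimplicity. Here $\B_{\bA}^{\loc}$ is a full subcategory of $\B_{\bA}$ closed under subobjects and quotients, because locality passes to submodules by the argument in the proof of Lemma \ref{lemsimple-exact} and dually to quotients. Since $\B_{\bA}$ is semisimple by separability, so is $\B_{\bA}^{\loc}$. Hence it is a braided fusion category, and (V2) identifies it with $\C_A^{\sloc}$.

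The main obstacle is the semisimplicity of $\B_{\bA}$, namely separability of $\bA$. Exactness of an algebra in a fusion category already gives semisimple module categories, since exact module categories over fusion categories are semisimple. So I would first try to cite exactness from Lemma \ref{lemsimple-exact} together with this standard fact. If that fails, I would fall back on the classical argument of \cite{KO02}: over a semisimple braided category in characteristic $0$, a haploid commutative algebra with $\dim\bA\neq0$ is separable.
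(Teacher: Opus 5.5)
Your proposal is correct and follows essentially the paper's route: pass to $(\B,\bA)$ with $\B\simeq\C\boxtimes\sVect$ fusion, get simplicity (hence exactness) of $\bA$ from Lemma \ref{lemsimple-exact}, and apply the local-module theory of \cite{CMSY25}/\cite{SY26}. The paper cites the fusion statement of \cite{CMSY25} directly, whereas you derive it from Theorem \ref{thmBs}(0) together with two facts: exact module categories over a fusion category are semisimple, and local modules are closed under submodules. That derivation is fine, but drop your fallback via $\dim\bA\neq0$: it is unnecessary and, without simplicity, false. For example, the square-zero extension $\unit\oplus X$ by an invertible boson $X\not\cong\unit$ is haploid and commutative with nonzero dimension, yet it is not separable.
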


\begin{proof}
$\B=\C\boxtimes\sVect$ is fusion and $\bA$ is a simple commutative algebra in it 
(first part of Lemma \ref{lemsimple-exact}), thus \cite{CMSY25} applies.
\end{proof}

This is the categorical content, for the even part $V_{\bar0}$ strongly rational, of 
the VOSA generalization of the strong-rationality theorem announced in \cite{CMSY25}.

\section{Witt classes}
\label{secWitt}

Let $\Wns$ be the Witt group of non-degenerate braided FTCs. Two non-degenerate 
braided FTCs $\C,\D$ are Witt equivalent iff 
$\C\boxtimes\Z(\mathcal A)\simeq\D\boxtimes\Z(\mathcal B)$ for some FTCs 
$\mathcal A,\mathcal B$ \cite{SY26} states: if $\C$ is non-degenerate and $A\in\C$ 
is a haploid commutative exact algebra, then $\C_A^{\loc}$ is non-degenerate and Witt 
equivalent to $\C$ (no ribbon structure and no Frobenius condition).

\begin{proposition}
\label{propWitt}
Let $\C$ be a non-degenerate braided FTC and $A$ a haploid supercommutative superalgebra 
with $\bA$ exact. Then $\N=\widetilde\B_{\iota(\bA)}^{\loc}$ is non-degenerate and 
$[\C]=[\N]$ in $\Wns$.
\end{proposition}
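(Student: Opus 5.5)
The plan is to reduce everything to the bosonic Witt statement of \cite{SY26} quoted just above, applied to the pair $(\widetilde\B, A')$ with $\widetilde\B=\C\boxtimes\T^\varepsilon$ and $A'=\iota(\bA)$. Non-degeneracy of $\N$ is already contained in the proof of Theorem \ref{thmBs}~(1), but I would recall it in order. First, $\widetilde\B$ is a non-degenerate braided FTC, being a Deligne product of the non-degenerate braided FTCs $\C$ and $\T$ \cite{Shi19}; only the braiding matters here, so the choice of $\varepsilon$ is irrelevant. Second, $A'$ is a haploid commutative exact algebra in $\widetilde\B$. Commutativity holds because $\iota$ is a braided tensor functor (Lemma \ref{lemtoric}) and $\bA$ is commutative in $\B$ (Proposition \ref{propbijection}~(a)). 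Haploidity follows from full faithfulness: $\Hom_{\widetilde\B}(\unit,A')=\Hom_\B(\unit,\bA)=\Hom_\C(\unit,A_0)=\kk$. Exactness is transferred exactly as in the proof of Theorem \ref{thmBs}~(1), by the cited result of \cite{SY26} for fully faithful braided tensor functors. The Witt theorem of \cite{SY26} then gives that $\N=\widetilde\B^{\loc}_{A'}$ is non-degenerate and $[\N]=[\widetilde\B]$ in $\Wns$.

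It then remains to show $[\widetilde\B]=[\C]$. The Witt class is multiplicative under $\boxtimes$, so $[\widetilde\B]=[\C]\cdot[\T]$. Moreover $\T=\Z(\Vect_{\mathbb Z/2})$ is a Drinfeld center, hence Witt trivial. Concretely, in the description of Witt equivalence recalled above, I would take $\mathcal A=\Vect_{\mathbb Z/2}$ and $\mathcal B=\Vect$, giving $\C\boxtimes\Z(\mathcal A)=\widetilde\B\simeq\widetilde\B\boxtimes\Z(\Vect)$, so that $\C\sim\widetilde\B$ directly. Combining the two steps yields $[\C]=[\widetilde\B]=[\N]$.

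The main obstacle I expect is the exactness transfer along $\iota$, which is the only step not purely formal. If the cited \cite{SY26} statement needs more than full faithfulness, I would use the explicit structure instead. Since $\iota(\B)=\mathcal Z_{(2)}(\unit\boxtimes\psi\subset\widetilde\B)$ and every object of $\widetilde\B$ splits as $\bigoplus_t X_t\boxtimes t$, we have $\widetilde\B\simeq\B\boxtimes_{\sVect}\T$ as a $\B$-module category. In particular $\widetilde\B_{A'}$ is a direct sum of two copies of $\B_{\bA}$, namely the components over $\unit,\psi$ and over $e,m$, twisted by the invertible object $\unit\boxtimes e$. Exactness of $\widetilde\B_{A'}$ as a module category then follows from exactness of $\B_{\bA}$, because $\otimes$ with objects of $\widetilde\B$ preserves projectives componentwise.
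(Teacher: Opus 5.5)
Your proposal is correct and follows the paper's proof. You transfer haploidity, commutativity and exactness to $A'=\iota(\bA)$ in the non-degenerate category $\widetilde\B=\C\boxtimes\T$ via \cite{SY26}, then apply the bosonic Witt theorem of \cite{SY26} to get that $\N$ is non-degenerate with $[\N]=[\widetilde\B]$, and conclude $[\widetilde\B]=[\C]$ because $\T=\Z(\Vect_{\mathbb Z/2})$ is Witt trivial. Your backup exactness argument, which splits $\widetilde\B_{A'}$ over the two cosets of $\{\unit,\psi\}$ into two copies of $\B_{\bA}$ exchanged by $\unit\boxtimes e$, is valid and more self-contained than the bare citation in the paper.
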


\begin{proof}
$\widetilde\B=\C\boxtimes\T$ is non-degenerate and $\iota(\bA)$ is haploid, commutative, 
exact according to Lemma \ref{lemtoric} of \cite{SY26}. 
 Thus, $[\widetilde\B]=[\N]$.  
Next, $\T=\Z(\Vect_{\mathbb Z/2})$, thus 
$\Vect\boxtimes\Z(\Vect_{\mathbb Z/2})\simeq\T\boxtimes\Z(\Vect)$ and $[\T]=[\Vect]$, 
 hence $[\widetilde\B]=[\C][\T]=[\C]$. The choice of $\varepsilon$ is 
irrelevant: $\T^+$ and $\T^-$ have the same underlying braided category (see Section 
\ref{sectoric}), and $\Wns$ is defined for braided categories.
\end{proof}

No super Witt group is defined here: it would require the fermionic product 
$(\D_1\boxtimes\D_2)^{\loc}_{\unit\oplus f_1f_2}$ (condensation of the boson $f_1f_2$), 
a proof that it is well defined on classes, and non-semisimple analogues of \cite{DMNO13}.

\section{Summary of our results and comparison with existing formulations}
\label{seccomparison}

In this paper, Proposition 
 \ref{propbijection}, Lemmas \ref{lemmonodromy}, \ref{lemdualityB}, \ref{lemtoric}, 
\ref{leminduction} are proved. Note that Lemma \ref{leminduction} (ii)
 uses the standard coherence of induction \cite{KO02}. 
Theorem  \ref{thmAs}, Theorem  \ref{thmBs} (0) were reduced to \cite{SY26} via Proposition  
\ref{propbijection}.  
    Theorem  \ref{thmBs} (1),(2), Corollary \ref{corext} are proved by using 
\cite{SY26} Lemma 5.6, 
  Corollary 5.14(b), Theorem  5.21, Lemma 5.25, \cite{LW23} Theorem  4.3, 
\cite{Shi19} Theorem  4.9.  
  Proposition  \ref{propsimple-current}, Example \ref{exIsing}, 
Proposition  \ref{propWitt} are proved 
 using \cite{SY26} Lemma 6.1, 7.3. 
The table of Section \ref{secVOSA}, Corollary \ref{corVOSA}, Lemma \ref{lemsimple-exact}, 
Corollary  \ref{corfusion} depend on (V1)-(V4) 
 \cite{CKL20, CKM24}, simple-current hypothesis mentioned. 
  Example \ref{exSF}  depends on (E1)-(E3), everything else derived. 
The remaining open questions are 
  super Witt group; other modular completions; odd morphisms for modules beyond Remark 
\ref{remenriched}; the text of \cite{CMSY25} Section 4.3.

The material of this paper is also useful in other areas of 
 condensed matter theory 
\cite{chernodub2017scale, zhang2020influence, zubkov2012momentum, zubkov2017topology, kmmzz}, 
and foliation theory \cite{Z1, Z2}.

Next we compare our formulation with those ones of \cite{CKM24} and \cite{CMSY25}. 

\subsection{Creutzig-Kanade-McRae \cite{CKM24}}

The comparison concerns the introduction and Sections 2.1-2.2 of \cite{CKM24}.

\emph{The category:} \cite{CKM24} defines $\mathcal{SC}$ with objects 
$W=(W^{\bar0},W^{\bar1})$ and 
$\Hom_{\mathcal{SC}}(W_1,W_2)=\Hom_{\C}(W_1^{\bar0}\oplus W_1^{\bar1},W_2^{\bar0}\oplus 
W_2^{\bar1})$, graded into even and odd morphisms. Its underlying category 
$\underline{\mathcal{SC}}$ has the even morphisms \cite{CKM24}, and an odd morphism 
$W_1\to W_2$ is an even morphism $W_1\to\Pi W_2$. The tensor product is 
$(W_1\boxtimes W_2)^i=\bigoplus_{i_1+i_2=i}W_1^{i_1}\boxtimes W_2^{i_2}$ and the braiding 
is $\bigoplus(-1)^{i_1i_2}\mathcal R_{W_1^{i_1},W_2^{i_2}}$ \cite{CKM24}. These are the 
definitions of Section \ref{secgraded}: $\B=\underline{\mathcal{SC}}$, with $\Pi=f\otimes-$.

\emph{Algebras and modules:} a superalgebra in $\C$ is an algebra in $\mathcal{SC}$ 
with even structure maps, and $\mathrm{Rep}^0A$ is the category of local modules with even 
structure maps \cite{CKM24}. Its underlying category is $\B_{\bA}^{\loc}$ by Proposition  
\ref{propbijection}.  

\emph{Twist:} \cite{CKM24} states that $\theta_V=e^{2\pi iL_0}$ and the parity 
involution $P_V$ coincide for correct statistics and differ for incorrect statistics. 
In our notation these are the cases $\varepsilon=-1$ and $\varepsilon=+1$ of the single 
condition $\theta_{A_1}=\varepsilon\,\id$ (see Section \ref{secVOSA}).

\emph{Checks:} the braiding signs agree. The dimensions $\mathrm{qdim}\,V_-$ in the 
three cases agree with $\theta/q$ computed from our data. NS modules are local and Ramond 
modules are twisted (non-local), as in Example \ref{exIsing}.

\emph{Odd morphisms:} the dictionary ``odd morphism $=$ even morphism into $\Pi$'' 
is stated in \cite{CKM24} for $\mathcal{SC}$. For modules we use it as the definition of 
$\D^{\mathrm s}$ (see Remark \ref{remenriched}). 

\emph{Difference of scope:} our Theorems \ref{thmBs} (0)-(2) concern possibly 
non-semisimple $\C$ and exact (not necessarily separable) $\bA$, and claim the 
M\"uger-center statement for $\mathrm{Rep}^0A$.

\subsection{Creutzig-McRae-Shimizu-Yadav \cite{CMSY25}}

The comparison concerns the introduction and Sections 1-3 (through Section 3.5).
 Section 4.3 is described below only through the introduction.

\emph{Conventions:} \cite{CMSY25} use left modules and 
$\C_A^{\loc}=\{\mu_M\mu c_{M,A}c_{A,M}=\mu_M\}$, with $\otimes_A$ defined through 
$c^{-1}_{A,-}$. On local modules the left/right and $c$/$c^{-1}$ conventions 
give the same category (see Section \ref{secinduction}).

\emph{Results used:} for a braided FTC $\C$ and $A$ exact commutative haploid, 
$\C_A$ is a finite tensor category and $\C_A^{\loc}$ a braided FTC, non-degenerate if 
$\C$ is \cite{CMSY25}). Applied to $(\B,\bA)$ this is our Theorem \ref{thmBs} (0) and the 
finite-tensor-category part of (1). For a simple commutative algebra in a fusion category, 
\cite{CMSY25} gives Corollary \ref{corfusion}.

\emph{The condition $\theta_A^2=\id$:} \cite{CMSY25} show that for $\theta_A^2=\id$ 
the local module category is a braided Grothendieck-Verdier category with dualizing object 
$DA$, and ribbon if $\theta_A=\id$. In our setting the relevant twist is $\theta^{\mathrm s}$ 
of $\B$, and the condition is $\theta^{\mathrm s}_{\bA}=\id$ 
(Proposition  \ref{propbijection} (c)). 
 The condition $(\theta^{\mathrm s}_{\bA})^2=\id$ would read $\theta_{A_1}^2=\id$, automatic 
for $\theta_{A_1}=\pm1$.

\emph{Section 4.3:} The introduction of \cite{CMSY25} states that Section 4.3 treats 
extensions $V\subseteq A$ with $V$ a $\mathbb Z$-graded VOA and $A$ a $\mathbb Z$- or 
$\tfrac12\mathbb Z$-graded VOSA containing $V$ in its even part, and that Theorem  4.21 is a 
VOSA generalization of Theorem  1.1 (a simple $\mathbb Z_{\ge0}$-graded extension of a strongly 
rational VOA is strongly rational).  Corollary \ref{corfusion} gives the corresponding categorical 
statement in our formalism.

\section*{Acknowledgments}

The author is supported by the Institute of Mathematics, Academy of Sciences of the Czech
Republic (RVO 67985840). 
The author would like to thank S. Kanade for a valuable discussion.

	\medskip
	\noindent\textbf{Data Availability.}
	Data sharing is not applicable to this article as no datasets were generated
	or analysed during the current study.

	\medskip
	\noindent\textbf{Declarations}

This paper arose in close collaboration with ChatGPT, an AI assistant developed by 
OpenAI. The collaboration involved exploring proof strategies, refining arguments, locating
references, and drafting. Revision and writing of final text have been done by the authors.
The authors take full responsibility for the mathematical content and the final text.

	\medskip
	\noindent\textbf{Conflict of interest.}
	The authors have no conflicts of interest to declare that are relevant to the
	content of this article.


\end{document}